\numberwithin{equation}{section}
\def\eps{\epsilon}
\def\ZZ{\mathbb{Z}}
\def\RR{\mathbb{R}}
\def\res{\mathrm{res}}
\def\ntilde{\tilde{n}}
\theoremstyle{plain}
\newtheorem{theorem}{Theorem}[section]
\newtheorem{proposition}[theorem]{Proposition}
\newtheorem{lemma}[theorem]{Lemma}
\newtheorem{corollary}[theorem]{Corollary}
\title{Potential energy of totally positive algebraic integers}
\author{Giacomo Cherubini}
\author{Pavlo Yatsyna}
\address{
         Charles University,
         Faculty of Mathematics and Physics,
         Department of Algebra,
         Sokolov\-sk\'a 83, 18600 Praha~8,
         Czech Republic
        }
\email{
    cherubini@karlin.mff.cuni.cz\\
    yatsyna@karlin.mff.cuni.cz
    }
\date{\today}
\keywords{potential energy, polynomials, totally real fields, discriminant}
\subjclass[2020]{Primary 11R80; Secondary 11S05, 11C08, 11R09, 31A15}
\begin{document}

%% ABSTRACT %%
\begin{abstract}
Given positive real numbers, we prove two inequalities
involving their potential energy and their power sums.
We also prove an inequality involving the energy
and the discriminant and apply it to deduce a result on
totally positive irreducible polynomials.
\end{abstract}

%% TITLE %%
\maketitle

\section{Introduction}

Given $n$ positive real numbers $x_1,\dots,x_n$, there are
several ways to measure their interaction, such as
Maclaurin's inequalities, Newton's inequalities,
or even the simpler inequality between arithmetic mean and
geometric mean:
\begin{equation}\label{eq:AG}
\left(\frac{x_1+\cdots+x_n}{n}\right)^n\geq x_1\cdots x_n.
\end{equation}
When $x_1,\dots,x_n$ are the roots of a monic irreducible polynomial in $\ZZ[x]$,
many of these inequalities can be effectively improved. For instance,
Siegel \cite[Theorems I,II]{siegel} and Hunter \cite[Theorem 1]{hunter}
showed that the right-hand side in \eqref{eq:AG}
can be multiplied by and explicit function strictly greater than one.

Siegel's theorem is based on the study of the discriminant,
an idea already used in a paper by Schur \cite{schur}.
Their results have applications to what is nowadays called
the Schur--Siegel--Smyth trace problem
(see e.g.~\cite{aguirre-peral} for an overview of the problem and \cite{smith,www}
for the current state of the art).

Taking inspiration from Hunter's work, we study instead the \emph{potential energy}
of $x_1,\dots,x_n$, defined as
\begin{equation}
E = \sum_{1\leq i< j \leq n} (x_i-x_j)^2.
\end{equation}
Hunter's theorem states that we can multiply the right-hand side in \eqref{eq:AG}
by a certain explicit function of the potential energy. His proof requires
finding the maximum of~$E$ over the subset of $\RR^n_{+}$ where
the trace $ns=x_1+\cdots+x_n$ and the norm $p=x_1\cdots x_n$ are fixed (see \cite[Lemma 1]{hunter}); then using this maximum, he deduces a refinement of~\eqref{eq:AG}.
Our first observation is that Hunter's proof gives the minimum of $E$ as well.
\begin{proposition}\label{P1}
Let $s,p>0$ with $s^n>p$, and let $x_1,\dots,x_n$ be positive real numbers with $x_1+\cdots+x_n=ns$
and $x_1\cdots x_n=p$. Then
\[
E \geq (n-1)(ns)^2\alpha^2,
\]
where $\alpha$ is the solution in $(-\frac{1}{n-1},0)$ of the equation
\(
(1+\alpha(n-1))(1-\alpha)^{n-1} = s^{-n}p
\).
\end{proposition}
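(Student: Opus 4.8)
The plan is to recast the constrained minimization as a smooth optimization problem and exploit the same critical-point analysis that underlies Hunter's Lemma~1. First I would record the identity
\[
E = n\sum_{i=1}^n x_i^2 - \Big(\sum_{i=1}^n x_i\Big)^2 = n\sum_{i=1}^n x_i^2 - (ns)^2,
\]
so that, the trace being fixed, minimizing $E$ is equivalent to minimizing the power sum $\sum x_i^2$ subject to $\sum x_i = ns$ and $\prod x_i = p$. The feasible set $K=\{x\in\RR_{>0}^n : \sum x_i = ns,\ \prod x_i = p\}$ is closed and bounded (it lies in the simplex $\sum x_i = ns$), hence compact, and it is nonempty precisely because $s^n>p$ (by AM--GM). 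Thus the minimum is attained.

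Next I would apply Lagrange multipliers to $\sum x_i^2$ with the two constraints (using $\log\prod x_i=\log p$ for the product). The stationarity equations read $2x_i=\lambda+\mu/x_i$, i.e.\ every $x_i$ is a root of the single quadratic $2t^2-\lambda t-\mu=0$; hence at the minimizer the $x_i$ take at most two distinct values. Writing such a configuration as $m$ copies of a smaller value $s(1-y)$ and $n-m$ copies of a larger value $s(1+w)$, the trace constraint forces $my=(n-m)w$, and the variance identity collapses $E$ to the clean form
\[
E_m = m(n-m)\big(s(1+w)-s(1-y)\big)^2 = n^2 s^2\, y\,w,
\]
while the norm constraint becomes $(1-y)^m(1+w)^{n-m}=s^{-n}p$. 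Since the left-hand side is strictly decreasing in $y\in(0,1)$ (with $w=\tfrac{m}{n-m}y$), there is a unique such configuration for each $m\in\{1,\dots,n-1\}$. For $m=1$ the substitution $\alpha=-y/(n-1)$ turns the norm constraint into exactly $(1+\alpha(n-1))(1-\alpha)^{n-1}=s^{-n}p$, identifies $\alpha$ as the unique root in $(-\tfrac1{n-1},0)$ (the map $t\mapsto(1+t(n-1))(1-t)^{n-1}$ increasing from $0$ to $1$ on that interval, since $s^{-n}p\in(0,1)$), and gives $E_1=(n-1)(ns)^2\alpha^2$, the asserted bound.

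It then remains to prove that $m=1$ yields the smallest energy among all two-value critical configurations, i.e.\ that $E_m=n^2s^2\,y_m w_m$ is increasing in the minority multiplicity $m$ for fixed $s^{-n}p$. This monotonicity is the technical heart of the argument, and I expect it to be the main obstacle: majorization is of no help, since for $n\ge4$ the configurations for different $m$ need not be comparable in the majorization order, so Schur-convexity of $E$ alone does not order them. I would instead treat $m$ as a continuous parameter, use the norm constraint to obtain $y=y(m)$ by implicit differentiation, and verify $\tfrac{d}{dm}(y_m w_m)>0$ directly; the computation is elementary but delicate, relying on the strict concavity of $\log$ through the constraint. This step is precisely the minimization counterpart of Hunter's Lemma~1, and once it is in place the proposition follows: the global minimum of $E$ over $K$ is attained at a two-value configuration, hence at the $m=1$ configuration, giving $E\ge E_1=(n-1)(ns)^2\alpha^2$.
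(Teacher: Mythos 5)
Your reduction is sound and, up to reparametrization, is the paper's own: the identity $E=n\sum x_i^2-(ns)^2$, the Lagrange-multiplier step forcing every $x_i$ to satisfy one quadratic, the resulting two-value configurations, and the identification of the $m=1$ configuration with the stated $\alpha\in(-\tfrac{1}{n-1},0)$ are all correct. Indexing the critical configurations by the multiplicity $m\in\{1,\dots,n-1\}$ of the \emph{smaller} value is a tidy way to absorb into a single family what the paper treats as two roots $\alpha_1<0<\alpha_2$ for each $k\le\lfloor n/2\rfloor$.

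The genuine gap is exactly where you flagged it: the assertion that $E_m=n^2s^2\,y_mw_m$ is least at $m=1$ is announced but not proved, and this comparison is the entire content of the proposition beyond routine calculus --- without it you have only shown that the minimum is one of $n-1$ explicitly parametrized candidates. Moreover, you are aiming at a statement stronger than necessary: global monotonicity of $m\mapsto y_mw_m$ on $[1,n-1]$, rather than just $E_1\le E_m$. The paper (following Hunter) splits the comparison into two separately verifiable pieces, each of which collapses, after the substitution $t=(1+\alpha m)/(1-\alpha)$ with $m=\tfrac nk-1$, to a one-variable inequality settled by a single differentiation: Hunter's Lemma~2 gives $|\alpha_1|\le\alpha_2$ for each fixed multiplicity, so the negative-root (minority-small) configuration always beats its mirror; and the analogue of Hunter's Lemma~3 (Lemma~\ref{S2:lemma} here) shows that among the negative-root configurations the quantity $\alpha_1^2(\tfrac nk-1)$ is minimized at $k=1$, by reducing to $\tfrac12(t-\tfrac1t)-\log t<0$ for $t\in(0,1)$. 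If you push through your implicit differentiation in $m$ you will be driven to essentially the same substitution, but the single global statement is harder to control near $m=n/2$, where $y_m$ and $w_m$ exchange roles; I would adopt the two-step decomposition and supply the two elementary inequalities explicitly. (A minor aside: nonemptiness of your set $K$ does not follow from AM--GM alone --- that gives only the necessary direction of $s^n>p$ --- but it is irrelevant here since the proposition assumes a point of $K$ is given.)
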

In Section~\ref{S2} we sketch the proof of Proposition~\ref{P1},
which follows closely Hunter's proof of \cite[Lemma 1]{hunter}.
Curiously, when the energy is given and is not too large, we can use Proposition \ref{P1}
to obtain an inequality that goes in the opposite direction of \eqref{eq:AG}.
\begin{corollary}\label{C1}
Let the notation be as in Proposition \ref{P1}.
Assume that $E<\frac{(ns)^2}{n-1}$ and let
$\beta$ be the negative root of $E=(n-1)(ns)^2\beta^2$. Then
\begin{equation}\label{eq:corollary}
\frac{s^n}{p} \leq \frac{1}{(1+\beta(n-1))(1-\beta)^{n-1}}.
\end{equation}
\end{corollary}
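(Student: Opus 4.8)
The plan is to reduce everything to the monotonicity of the single function
\[
f(t) = (1+t(n-1))(1-t)^{n-1},
\]
which appears on both sides of the statements. First I would record that $f(0)=1$, that $f(-\tfrac{1}{n-1})=0$, and compute the derivative. A short factorization gives
\[
f'(t) = -n(n-1)\,t\,(1-t)^{n-2},
\]
and on the interval $(-\tfrac{1}{n-1},0)$ one has $t<0$ and $(1-t)^{n-2}>0$, so $f'(t)>0$ there. Hence $f$ is strictly increasing on $(-\tfrac{1}{n-1},0)$, mapping it bijectively onto $(0,1)$; this both pins down $\alpha$ as the unique point with $f(\alpha)=p/s^n$ (legitimate since $0<p<s^n$) and supplies the comparison tool used below.

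Next I would locate $\beta$ inside the same interval. By definition $\beta<0$ and $E=(n-1)(ns)^2\beta^2$, so the hypothesis $E<\tfrac{(ns)^2}{n-1}$ forces $\beta^2<\tfrac{1}{(n-1)^2}$, i.e.\ $\beta\in(-\tfrac{1}{n-1},0)$. In particular $1+\beta(n-1)>0$ and $(1-\beta)^{n-1}>0$, so $f(\beta)>0$ and the right-hand side of the claimed inequality is a well-defined positive quantity.

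The heart of the argument is then the comparison between $\alpha$ and $\beta$. Proposition~\ref{P1} gives $E\geq (n-1)(ns)^2\alpha^2$, while by construction $E=(n-1)(ns)^2\beta^2$. Combining these yields $\beta^2\geq\alpha^2$, and since both $\alpha$ and $\beta$ are negative this is equivalent to $\beta\leq\alpha$. Feeding this into the monotonicity of $f$ established in the first step gives $f(\beta)\leq f(\alpha)=p/s^n$, and taking reciprocals (all quantities positive) produces exactly
\[
\frac{s^n}{p}\leq\frac{1}{f(\beta)}=\frac{1}{(1+\beta(n-1))(1-\beta)^{n-1}},
\]
which is \eqref{eq:corollary}.

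The only real obstacle is the sign analysis of $f'$, i.e.\ verifying that $f$ is genuinely increasing on $(-\tfrac{1}{n-1},0)$ rather than merely on a subinterval; this is what licenses the passage from $\beta\leq\alpha$ to $f(\beta)\leq f(\alpha)$. Everything else is bookkeeping: translating the energy hypothesis into the location of $\beta$, and keeping careful track of the fact that $\alpha$ and $\beta$ are negative so that $\beta^2\geq\alpha^2$ reverses into $\beta\leq\alpha$.
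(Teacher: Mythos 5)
Your proposal is correct and follows essentially the same route as the paper: deduce $\beta\leq\alpha$ from Proposition~\ref{P1}, then conclude by monotonicity of $(1+t(n-1))(1-t)^{n-1}$ on $(-\tfrac{1}{n-1},0)$ (the paper phrases this as the reciprocal being decreasing, which is equivalent). Your version is slightly more detailed, supplying the explicit factorization $f'(t)=-n(n-1)t(1-t)^{n-2}$ and the check that the hypothesis on $E$ places $\beta$ in the correct interval.
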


Consider now the polynomial
\begin{equation}\label{eq:003}
f(x) = \prod_{k=1}^{n}(x-x_k) = x^n - a_{n-1}x^{n-1} + \cdots + (-1)^{n-1}a_1x + (-1)^na_0.
\end{equation}
Fixing the trace and the norm, as done by Hunter (and Siegel),
amounts to fixing the coefficients $a_{n-1}$ and $a_0$.

The main goal of the present paper is to show that one can
use different functions of the roots
and derive a result similar to Hunter's theorem.
We will work with the power sums
\[
S_r = x_1^r + \cdots + x_n^r, \qquad r\geq 1.
\]
Let us point out that $S_1$ is the
trace, while knowing $S_1,\dots,S_r$ allows one to recover
the coefficients $a_{n-1},\dots,a_{n-r}$
in \eqref{eq:003} by means of Newton's identities.

\begin{theorem}\label{T1}
Let $r\in\mathbb{N}$, $r\geq 3$, and let $S_1,S_r$ be positive real numbers
satisfying $S_r\leq S_1^r\leq n^{r-1}S_r$.
Let $x_1,\dots,x_n\geq 0$ with $x_1+\cdots+x_n=S_1$, $x_1^r+\cdots+x_n^r=S_r$
and with potential energy~$E$. Then,
\begin{equation}\label{T1:eq001}
E \geq E_{min} = (n-1)S_1^2 \alpha^2,
\end{equation}
where $\alpha$ is the root in $[0,1)$ of the equation
$(1+\alpha(n-1))^r+(n-1)(1-\alpha)^r=n^rS_rS_1^{-r}$.
Conversely, if the energy $E$ is given, then we have
\begin{equation}\label{T1:eq003}
S_r \leq ((1+\beta(n-1))^r + (n-1)(1-\beta)^r) \frac{S_1^r}{n^r},
\end{equation}
where $\beta$ is the non-negative root of $(n-1)S_1^2\beta^2=E$.
\end{theorem}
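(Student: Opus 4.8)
The plan is to treat this as a constrained optimization problem, exactly mirroring the structure of Proposition~\ref{P1} but with the norm constraint replaced by the power-sum constraint $S_r$. First I would fix the two constraints $\sum x_i = S_1$ and $\sum x_i^r = S_r$, and seek to minimize $E = \sum_{i<j}(x_i-x_j)^2$. The key algebraic identity I would use is
\[
E = n\sum_i x_i^2 - \Bigl(\sum_i x_i\Bigr)^2 = nS_2 - S_1^2,
\]
so that minimizing $E$ is equivalent to minimizing the second power sum $S_2$ subject to the fixed values of $S_1$ and $S_r$. This reduces everything to a Lagrange-multiplier problem for $S_2$ with two linear-in-the-multipliers constraints.

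Next I would analyze the critical points. Setting up the Lagrangian $\sum x_i^2 - \lambda(\sum x_i - S_1) - \mu(\sum x_i^r - S_r)$, the stationarity condition reads $2x_i = \lambda + \mu r x_i^{r-1}$ for every $i$. Since $r\geq 3$, the function $t\mapsto 2t - \mu r t^{r-1}$ is not injective on $[0,\infty)$, so each $x_i$ can take at most a bounded number of distinct values at a critical point; a convexity/counting argument (as in Hunter's Lemma~1) should show that the extremizing configuration has all the $x_i$ equal to one of at most two values. The guessed extremal configuration, read off from the target formula, is that $n-1$ of the roots equal a common value and one root is different: writing the distinguished root as $S_1(1+\alpha(n-1))/n$ and the remaining $n-1$ roots as $S_1(1-\alpha)/n$ each, one checks directly that $\sum x_i = S_1$ automatically, while the constraint $\sum x_i^r = S_r$ becomes precisely $(1+\alpha(n-1))^r + (n-1)(1-\alpha)^r = n^r S_r S_1^{-r}$, and that $E = (n-1)S_1^2\alpha^2$ for this configuration. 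The hypothesis $S_r \le S_1^r \le n^{r-1}S_r$ is exactly what guarantees a root $\alpha\in[0,1)$ exists (the endpoints corresponding to all roots equal, giving $E=0$, and to the maximally concentrated configuration).

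The main obstacle will be verifying that this two-valued configuration is genuinely the \emph{minimizer} rather than a saddle or a maximizer, and that minimizing over nonnegative $x_i$ does not push mass to the boundary $x_i=0$ in a way the Lagrange analysis misses. I would handle this by the same mechanism Hunter uses: show that the map $\alpha\mapsto (1+\alpha(n-1))^r+(n-1)(1-\alpha)^r$ is monotone on the relevant interval and convex, so that among all admissible configurations the one with the prescribed $S_r$ has its energy pinned to the stated value; the inequality direction then follows because any spreading of the roots away from the two-valued shape, keeping $S_1$ fixed, strictly increases $E$ while changing $S_r$ in a controlled direction. Once \eqref{T1:eq001} is established as a genuine minimum, the converse statement \eqref{T1:eq003} is a formal rearrangement: given $E$, define $\beta\geq 0$ by $(n-1)S_1^2\beta^2 = E$, and since $\alpha=\beta$ produces energy exactly $E$, the defining equation for $\alpha$ applied at $\beta$ yields $n^r S_r S_1^{-r} \le (1+\beta(n-1))^r + (n-1)(1-\beta)^r$, which is \eqref{T1:eq003} after multiplying through by $S_1^r/n^r$. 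I expect the bulk of the work, and the only real subtlety, to lie in the rigorous justification that the extremal shape is attained and minimizing, and in confirming the constraint inequalities translate exactly into $\alpha\in[0,1)$.
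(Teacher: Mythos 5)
Your setup matches the paper's: Lagrange multipliers force each $x_i$ to satisfy a trinomial equation with at most two positive roots (the paper invokes Descartes' rule of signs where you invoke a counting argument), so every interior critical point has $k$ coordinates equal to one value $x$ and $n-k$ equal to another value $y$, and your derivation of the converse \eqref{T1:eq003} from the monotonicity of $\alpha\mapsto(1+\alpha(n-1))^r+(n-1)(1-\alpha)^r$ is exactly the paper's. The first genuine gap is in the middle: you ``read off'' from the target formula that the minimizer is the configuration with $k=1$, but the Lagrange analysis produces a whole family of critical values, up to two for each $k=1,\dots,\lfloor n/2\rfloor$. For each $k$ the constraint $k(1+\alpha(\frac{n}{k}-1))^r+(n-k)(1-\alpha)^r=n^rS_rS_1^{-r}$ can have a negative root $\alpha_1$ and a positive root $\alpha_2$, each yielding the energy value $(\frac{n}{k}-1)\alpha_i^2S_1^2$. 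Proving that the smallest of all these is the one at $k=1$ with the \emph{positive} root is the real content of the paper's Lemma~\ref{S2:lemma1} (that $|\alpha_1|\geq\alpha_2$ whenever both exist) and Lemma~\ref{S2:lemma2} (that $\alpha_2^2(\frac{n}{k}-1)$ is increasing in $k$ while $\alpha_1^2(\frac{n}{k}-1)$ is decreasing); both require implicit differentiation of the constraint and a sign analysis of an auxiliary polynomial. Nothing in your proposal substitutes for this selection step, and it is not a formality --- it is where the specific shape of the answer is actually established.

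The second gap is the boundary. Your proposed mechanism, that ``any spreading of the roots away from the two-valued shape strictly increases $E$,'' is not an argument, and in particular it does not address configurations with some $x_i=0$, which the interior Lagrange analysis misses entirely. The paper handles these via the identity $E(x_1,\dots,x_{n-j},0,\dots,0)=\frac{n}{n-j}E(x_1,\dots,x_{n-j})+\frac{jS_1^2}{n-j}$, which reduces the boundary case to the same minimization in $n-j$ variables, and then needs a further monotonicity statement in $n$ (Lemma~\ref{S2:lemma3}: the quantity $(V(1,n)+1)/n$ is decreasing in $n$) to check that none of these boundary minima undercuts the interior one. This is a separate, nontrivial lemma that your outline does not anticipate, and without it the claim that $E_{min}=(n-1)S_1^2\alpha^2$ is a \emph{global} minimum over the closed orthant is unproved.
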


Note that the condition $S_r\leq S_1^r$ is needed in Theorem \ref{T1},
since power sums satisfy it by the positivity of $x_1,\dots,x_n$;
similarly, the condition $S_1^r\leq n^{r-1} S_r$
is necessary because of H\"older's inequality.
In the latter, equality holds if and only if $x_1=\cdots=x_n$,
which correponds to having $\alpha=0$ in \eqref{T1:eq001}.

Note also that
\[
E = (n-1) \Big(\sum_{i=1}^{n} x_i\Big)^2 - 2n\sum_{i<j} x_ix_j \leq (n-1)S_1^2.
\]
The inequality shows that $\beta\leq 1$ in Theorem \ref{T1}.
As a consequence, the factor before~$S_1^r$ in~\eqref{T1:eq003} is
never larger than $n^r$ (with the value $n^r$ being in fact attained
in the limit case $\beta=1$).
Therefore, \eqref{T1:eq003} is a refinement of the inequality $S_r\leq S_1^r$.

In order to study the maximum of $E$ a little more care is needed,
since the maximum is attained at points $(x_1,\dots,x_n)$
where some of the numbers can be zero.
To keep track of this, we introduce
a new quantity $\ntilde$ defined by the relation
\begin{equation}\label{def:ntilde}
\ntilde^{r-1} = S_1^r S_r^{-1}.
\end{equation}
The inequality $S_r\leq S_1^r$ shows that $\ntilde\geq 1$,
while H\"older's inequality gives $\ntilde\leq n$.

\begin{theorem}\label{T1max}
Let $r\in\mathbb{N}$, $r\geq 3$,
and let $S_1,S_r>0$ such that $S_r\leq S_1^r\leq n^{r-1}S_r$.
Define $\ntilde$ as in~\eqref{def:ntilde}.
Let $x_1,\dots,x_n\geq 0$ with $x_1+\cdots+x_n=S_1$, $x_1^r+\cdots+x_n^r=S_r$
and with potential energy $E$. Then,
\begin{equation}\label{T1max:eq001}
E \leq E^{max} = \frac{S_1^2}{\lceil\ntilde\rceil}
\Bigl(n (\lceil\ntilde\rceil-1) \alpha^2 + n-\lceil\ntilde\rceil\Bigr),
\end{equation}
where $\alpha$ is the root in $(-\frac{1}{\rule{0pt}{7pt}\lceil\ntilde\rceil-1},0)$ of \begin{equation}\label{T1max:eq002}
(1+\alpha(\lceil\ntilde\rceil-1))^r+(\lceil\ntilde\rceil-1)(1-\alpha)^r=\lceil\ntilde\rceil^rS_rS_1^{-r}.
\end{equation}
\end{theorem}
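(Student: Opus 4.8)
The plan is to reduce Theorem~\ref{T1max} to a constrained maximisation and then to identify the extremal configuration. Writing $S_2=\sum_i x_i^2$, a direct expansion gives $E=nS_2-S_1^2$, so maximising $E$ is the same as maximising $S_2$ over the compact set
\[
F=\Bigl\{x\in\RR^n:\ x_i\ge 0,\ \textstyle\sum_i x_i=S_1,\ \sum_i x_i^r=S_r\Bigr\},
\]
which is non-empty exactly because the hypotheses $S_r\le S_1^r\le n^{r-1}S_r$ place $S_r$ in the range of $\sum_i x_i^r$ on the simplex. Since $F$ is compact the maximum is attained, say at $x^\ast$. First I would determine the shape of $x^\ast$ by a Lagrange/KKT analysis: the strictly positive coordinates of $x^\ast$ all satisfy one stationarity equation $2t-\mu r t^{r-1}=\lambda$, and for $r\ge 3$ the map $t\mapsto 2t-\mu r t^{r-1}$ is strictly convex or strictly concave on $(0,\infty)$, its second derivative $-\mu r(r-1)(r-2)t^{r-3}$ having constant sign. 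Hence that equation has at most two positive roots, so $x^\ast$ consists of $K$ coordinates equal to some $a$, $L$ coordinates equal to some $b$ with $a>b>0$, and $P=n-K-L$ coordinates equal to $0$ (the case of a single positive value occurs, as the computation below shows, exactly when $\ntilde\in\ZZ$, and then $\alpha=0$).

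Next I would record the feasibility constraint on the support size $m=K+L$. Among $m$ positive numbers with sum $S_1$ the power sum $\sum x^r$ is minimised, by convexity, at the balanced point, where it equals $S_1^r/m^{r-1}$; since this must not exceed $S_r$ we get $m^{r-1}\ge S_1^rS_r^{-1}=\ntilde^{r-1}$, that is $m\ge\lceil\ntilde\rceil$. Parametrising the two-value configuration by $a=\frac{S_1}{m}(1+\rho)$ and $b=\frac{S_1}{m}(1-\frac{K}{L}\rho)$, so that the mean is automatically $S_1/m$, a short computation gives
\[
S_2=\frac{S_1^2}{m}\Bigl(1+\tfrac{K}{L}\rho^2\Bigr),\qquad E=\frac{S_1^2}{m}\Bigl((n-m)+n\tfrac{K}{L}\rho^2\Bigr),
\]
while the constraint $\sum x^r=S_r$ becomes $K(1+\rho)^r+L\bigl(1-\frac{K}{L}\rho\bigr)^r=m^rS_rS_1^{-r}$.

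The main obstacle is the remaining discrete optimisation over the support size $m$ and the multiplicities $(K,L)$, and I expect this to be the heart of the argument. As motivation, relaxing $K,L$ to continuous variables one checks that the stationarity equations force $a=b$: the unconstrained optimum is the balanced configuration on $m=\ntilde$ equal coordinates, which is generally infeasible because $\ntilde\notin\ZZ$, so the true maximum is pushed to the nearest admissible lattice configuration. The claim to be established is that this configuration is $m=\lceil\ntilde\rceil$, $K=\lceil\ntilde\rceil-1$, $L=1$, with $\rho$ the appropriate root of the constraint equation. Concretely one must show that among feasible splits the choice $L=1$ maximises $\frac{K}{L}\rho^2$, and then compare across $m$ to see that lowering $m$ to its least feasible value $\lceil\ntilde\rceil$ strictly increases $E$; the mechanism is that for $m>\lceil\ntilde\rceil$ the most efficient configuration (small value of multiplicity one) is no longer feasible and only less spread-out admissible configurations remain, which carry strictly smaller $S_2$.

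Finally I would substitute $K=\lceil\ntilde\rceil-1$, $L=1$ and write $\alpha=-\rho$. The displayed formula for $E$ becomes exactly~\eqref{T1max:eq001}, and the constraint becomes $(1+\alpha(\lceil\ntilde\rceil-1))^r+(\lceil\ntilde\rceil-1)(1-\alpha)^r=\lceil\ntilde\rceil^rS_rS_1^{-r}$, which is~\eqref{T1max:eq002}. The admissible range $m-1<\ntilde\le m$ for $m=\lceil\ntilde\rceil$ is precisely what forces the relevant root to lie in $(-\frac{1}{\lceil\ntilde\rceil-1},0)$, since positivity of the small value $b$ is equivalent to $\alpha>-\frac{1}{\lceil\ntilde\rceil-1}$ and comparison of the two roots of the constraint (the negative one having larger modulus, hence larger $E$) selects $\alpha<0$. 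I would close by treating the boundary cases separately: when $\ntilde\in\ZZ$ one has $\alpha=0$ and the extremiser is the balanced configuration on $\ntilde$ points with $n-\ntilde$ zeros, and when $\lceil\ntilde\rceil=n$ there are no vanishing coordinates and the statement meets the regime already covered by Theorem~\ref{T1}.
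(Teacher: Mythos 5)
Your reduction is sound and follows the same route as the paper: the identity $E=nS_2-S_1^2$, a Lagrange/KKT analysis forcing the positive coordinates of an extremiser to take at most two values (your convexity argument for the stationarity equation is a clean substitute for the paper's appeal to Descartes' rule of signs), the feasibility bound $m\geq\lceil\ntilde\rceil$ on the support size via H\"older, and the final substitution $K=\lceil\ntilde\rceil-1$, $L=1$, $\alpha=-\rho$ does reproduce \eqref{T1max:eq001} and \eqref{T1max:eq002} exactly. The problem is that everything you flag as ``the claim to be established'' or ``the heart of the argument'' is precisely what you do not establish, and it is where essentially all of the work lies.

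Three separate comparisons are needed, and none of them is routine. (i) For a fixed support and a fixed split the constraint equation has up to two admissible roots, and you must prove that the negative one yields the larger energy; this is the inequality $|\alpha_1|\geq\alpha_2$ of Lemma~\ref{S2:lemma1}, proved there by analysing an auxiliary function, not by inspection. (ii) For a fixed support size you must prove that giving the small value multiplicity one is optimal among all splits; this is the monotonicity of $U(k,n)$ in $k$ (Lemma~\ref{S2:lemma2}), which requires implicit differentiation of the constraint equation. (iii) The comparison across support sizes is not the simple monotone ``mechanism'' you describe. By \eqref{2909:eq004} the energy of a configuration with $j$ zeros equals $\frac{n}{n-j}E(x_1,\dots,x_{n-j})+\frac{jS_1^2}{n-j}$, so one must compare $U(k,m)$ not with $U(k',m-1)$ but with an affine transform of it; moreover, when $\ntilde\leq n-\lfloor n/2\rfloor$ the interior critical points of the $n$-variable problem only produce the \emph{positive} roots (the $V$-values), and the $U$-values become available only after enough coordinates are set to zero, so local minima and local maxima of the restricted problems must both be tracked. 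This is what forces the two-case analysis of Section~\ref{S4} and the two further monotonicity statements $F(k,n)\geq F(k,n+1)$ and $G(k,n)\geq G(k+1,n+1)$ (Lemmas~\ref{S2:lemma3} and~\ref{S2:lemma4}), each another nontrivial implicit-differentiation computation. Without (i)--(iii) you have identified the correct candidate but not shown that it is the maximum. (A minor additional slip: when $\lceil\ntilde\rceil=n$ the statement does not reduce to Theorem~\ref{T1}, which concerns the \emph{minimum} and uses the positive root; the maximum still requires the negative root, i.e.\ $U(1,n)$.)
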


Unlike Theorem \ref{T1}, we do not have a converse statement for Theorem \ref{T1max}
similar to \eqref{T1:eq003}. Loosely speaking, such a converse result would give an estimate for $\ntilde$
by using~\eqref{T1max:eq002}.
However, $\ntilde$ is already present on the left-hand side of the equation
(whereas the coefficients of $\alpha$ in \eqref{T1:eq003} depend only on $n$).
Therefore, using \eqref{T1max:eq002} to estimate~$\ntilde$ appears to be uninteresting.

In the last section of the paper, we relate the potential energy to the discriminant
\[
\Delta = \prod_{i<j} (x_i-x_j)^2.
\]
By applying \eqref{eq:AG} to the numbers $(x_i-x_j)^2$, we immediately have
\begin{equation}\label{eq:EDelta}
\Bigg(\raisebox{2pt}{\ensuremath{\displaystyle\frac{\raisebox{-2pt}{$E$}}{\binom{n}{2}\rule{0pt}{10pt}}}}\Bigg)^{\binom{n}{2}} \geq \Delta.
\end{equation}
We improve this to the following.
\begin{theorem}\label{T2}
Let $S_1,S_2$ be positive real numbers such that
\begin{equation}\label{1401:eq001}
(n-1)S_2 < S_1^2 < nS_2.
\end{equation}
Let $x_1,\dots,x_n>0$ satisfy $x_1+\cdots+x_n=S_1$ and $x_1^2+\cdots+x_n^2=S_2$.
Denote by $E$ the potential energy of $x_1,\dots,x_n$ and by $\Delta$ their discriminant.
Then
\[
\Bigg(\raisebox{2pt}{\ensuremath{\displaystyle\frac{\raisebox{-2pt}{$E$}}{\binom{n}{2}\rule{0pt}{10pt}}}}\Bigg)^{\binom{n}{2}} \geq
\frac{(2n)^{\binom{n}{2}}}{Y(n)} \, \Delta,
\]
where $Y(n)$ is the hyperfactorial, i.e.~$Y(n)=2^23^3\cdots n^n$.
\end{theorem}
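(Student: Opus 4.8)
The plan is to exploit the fact that the hypotheses already fix the potential energy, and then to reduce the statement to a classical extremal problem for the discriminant. Expanding $E=\sum_{i<j}(x_i-x_j)^2=(n-1)S_2-(S_1^2-S_2)$ gives $E=nS_2-S_1^2$, so under the constraints $x_1+\cdots+x_n=S_1$, $x_1^2+\cdots+x_n^2=S_2$ the left-hand side $(E/\binom{n}{2})^{\binom{n}{2}}$ is a fixed constant. Dividing through, the assertion is equivalent to the upper bound $\Delta\le \frac{Y(n)}{(2n)^{\binom{n}{2}}}\left(\frac{E}{\binom{n}{2}}\right)^{\binom{n}{2}}$ on the discriminant over all admissible configurations. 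Since $\Delta=\prod_{i<j}(x_i-x_j)^2$ is translation invariant, I would centre the variables by setting $y_i=x_i-S_1/n$; then $\sum_i y_i=0$, $\sum_i y_i^2=E/n$, and $\Delta=\prod_{i<j}(y_i-y_j)^2$. Because enforcing $x_i>0$ only shrinks the feasible set, it suffices to prove the bound for all real $y_i$ subject to these two moment constraints; the hypothesis $S_1^2<nS_2$ in \eqref{1401:eq001} guarantees $E>0$, so the problem is non-degenerate.

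The heart of the proof is to maximise $\log\Delta=\sum_{i<j}\log(y_i-y_j)^2$ on the compact set $\{\sum_i y_i=0,\ \sum_i y_i^2=E/n\}$. Since $\log\Delta\to-\infty$ whenever two coordinates collide, the maximum is attained at an interior critical point, which I would locate with Lagrange multipliers. The stationarity conditions read
\[
\sum_{j\neq k}\frac{2}{y_k-y_j}=\lambda+2\mu y_k,\qquad k=1,\dots,n;
\]
summing over $k$ annihilates the left-hand side by antisymmetry and forces $\lambda=0$, leaving $\sum_{j\neq k}(y_k-y_j)^{-1}=\mu y_k$. Multiplying by $y_k$, summing, and pairing the terms (each unordered pair contributes $1$) yields $\mu\sum_i y_i^2=\binom{n}{2}$, hence $\mu=n^2(n-1)/(2E)>0$. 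Writing $p(y)=\prod_i(y-y_i)$ and using $p''(y_k)/p'(y_k)=2\sum_{j\neq k}(y_k-y_j)^{-1}$, the relation becomes $p''(y_k)=2\mu y_k p'(y_k)$ at every root; consequently $p''-2\mu y\,p'+2\mu n\,p$ is a polynomial of degree $n$ vanishing at all $n$ roots of $p$ and sharing its leading coefficient, so it is identically zero. Thus $p$ solves a rescaled Hermite equation, and the $y_i$ are the roots of the physicist's Hermite polynomial $H_n$ dilated by the factor $t$ with $t^2=1/\mu$.

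It then remains to evaluate $\Delta$ at this extremiser. The polynomial $H_n$ (leading coefficient $2^n$) has roots $r_i$ with $\sum_i r_i=0$ and $\sum_i r_i^2=\binom{n}{2}$, and its discriminant is $\operatorname{disc}(H_n)=2^{3\binom{n}{2}}Y(n)$; dividing by $(2^n)^{2n-2}$ gives $\prod_{i<j}(r_i-r_j)^2=2^{-\binom{n}{2}}Y(n)$. Imposing $\sum_i y_i^2=E/n$ forces $t^2=\tfrac{2E}{n^2(n-1)}$, whence the maximal discriminant equals $(t^2/2)^{\binom{n}{2}}Y(n)=\left(\tfrac{E}{n^2(n-1)}\right)^{\binom{n}{2}}Y(n)$, and a direct simplification shows this is exactly $\frac{Y(n)}{(2n)^{\binom{n}{2}}}\left(\tfrac{E}{\binom{n}{2}}\right)^{\binom{n}{2}}$, as required; for $n=2$ the remaining hypothesis in \eqref{1401:eq001} is precisely the positivity of the shifted extremiser, so the bound is sharp.

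The main obstacle is the optimisation step: one must verify that the unique interior critical configuration is indeed the Hermite point and that it is a global maximiser rather than a saddle, for which I would rely on the compactness of the constraint set together with the blow-up of $-\log\Delta$ near the diagonals. This is exactly where the hyperfactorial $Y(n)$ enters, through the closed form of $\operatorname{disc}(H_n)$, so having that formula available (with the correct power of $2$) is essential. A secondary point to treat carefully is the reduction itself, namely that translation invariance of $\Delta$ plus the identity $E=nS_2-S_1^2$ recast the problem as a fixed-second-moment maximisation, and that discarding the positivity constraints can only enlarge the domain and therefore cannot invalidate an upper bound established on the larger moment-constrained set.
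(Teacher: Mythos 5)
Your proof is correct, and it rests on the same core mechanism as the paper's: maximise $\log\Delta$ by Lagrange multipliers on the intersection of a sphere and a hyperplane, observe that the extremal polynomial satisfies a second-order linear ODE, and evaluate the discriminant of that extremal polynomial. There are, however, two genuine differences worth recording. First, you centre the variables ($y_i=x_i-S_1/n$) and then relax to \emph{all} of $\RR^n$, so the constraint set becomes a compact manifold without boundary and the maximiser is automatically an interior critical point; the paper instead works inside $\RR_{\geq 0}^n$ and needs the left-hand inequality $(n-1)S_2<S_1^2$ in \eqref{1401:eq001} precisely to rule out zero coordinates at the maximum. Since an upper bound for $\Delta$ over the larger set suffices, your argument proves the theorem without that hypothesis (only $S_1^2<nS_2$, i.e.\ $E>0$, is needed for non-degeneracy) and without positivity of the $x_i$ --- this is exactly the strengthening the authors say in the introduction they would like but could not obtain. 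Second, you identify the extremiser as a dilated Hermite polynomial and import the classical formula $\mathrm{disc}(H_n)=2^{3\binom{n}{2}}Y(n)$, whereas the paper stays self-contained and re-derives the same value via the resultant recursion $\res(f,f')=(-\lambda)^{1-n}n^{n}\res(f'/n,f''/n)$; the two computations agree, and your constants ($\mu=n^2(n-1)/(2E)$, $t^2=1/\mu$, maximal discriminant $\bigl(E/(n^2(n-1))\bigr)^{\binom{n}{2}}Y(n)$) match the stated bound. The only blemish is the sentence claiming that $p''-2\mu y\,p'+2\mu n\,p$ ``has degree $n$ and shares the leading coefficient of $p$'': the correct observation is that the degree-$n$ terms of $-2\mu y\,p'$ and $2\mu n\,p$ cancel, so this combination has degree at most $n-1$ and vanishes at the $n$ distinct roots of $p$, hence is identically zero; the conclusion you draw from it is right.
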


Using the identity $E=nS_2-S_1^2$, Theorem \ref{T2} can be restated as
\[
x_1^2+\cdots+x_n^2 \geq \frac{(x_1+\cdots+x_n)^2}{n} + 2\binom{n}{2}\left(\frac{\Delta}{Y(n)}\right)^{1/\binom{n}{2}}.
\]
This is reminiscent of another inequality of Hunter \cite[Theorem 1]{hunter2}
(see also \cite[Theorem 6.4.2]{cohen}), which states that in a field $F$
of degree $n$, there exists a non-rational algebraic integer $\alpha_1$ in $F$
with trace in $[0,n/2]$ and
\begin{equation}\label{eq:cohen}
(|\alpha_1|^2+\cdots+|\alpha_n|^2)\leq \frac{(\alpha_1+\cdots+\alpha_n)^2}{n} + \gamma_{n-1}\left(\frac{|\Delta_F|}{n}\right)^{1/(n-1)},
\end{equation}
where the $\alpha_i$ are the conjugates of $\alpha_1$;
$\Delta_F$ is the discriminant of the field $F$; and
$\gamma_{n-1}$ is the $(n-1)$th Hermite constant.
Inequality \eqref{eq:cohen} is proved by means of the geometry of numbers,
which is a very different method from the one we use to prove Theorem~\ref{T2}.

The proof of Theorem \ref{T2} is more similar in flavour to Schur's proof
of \cite[Satz II, Satz XI]{schur} and consists in maximizing the discriminant
by the method of Lagrange multipliers.
While Schur studies the discriminant in the full unit ball in $\RR^n$, we work
on the intersection of a sphere and a hyperplane.

Denote by $A(n)$ the factor in front of $\Delta$ in Theorem \ref{T2},
i.e.~$A(n)=Y(n)^{-1}(2n)^{\binom{n}{2}}$. Notice that $A(2)=1$ and that for $n\geq 2$ we have
\[
\frac{A(n+1)}{A(n)} = \frac{2^{n}}{(n+1)}\left(1+\frac{1}{n}\right)^{\frac{n^2-n}{2}} > 1.
\]
This shows that $A(n)>1$ for all $n>2$ and therefore Theorem \ref{T2}
is an improvement of \eqref{eq:EDelta} whenever $n>2$,
provided \eqref{1401:eq001} holds.
More precisely, for $n$ large we have
\[
\log A(n) \sim \frac{n^2}{2}\left(\log 2 - \frac{1}{2}\right) \approx \frac{n^2}{2} \times 0.1931..
\]
When $x_1,\dots,x_n$ are the roots of a monic separable polynomial in $\ZZ[x]$
it follows that, since $\Delta\geq 1$,
\begin{equation}\label{eq:1.21}
E \geq \frac{2}{\sqrt{e}} \binom{n}{2} + o(n^2)
\end{equation}
(and $2/\sqrt{e}\approx 1.21>1$). This is not as strong as other results available in the literature:
already in Hunter's paper it was derived $E\geq n^2(\sqrt{e}+o(1))$ for $n$ large enough. Nevertheless,
Theorem \ref{T2} gives a clean improvement of \eqref{eq:EDelta} valid for all degrees $n$,
under the only assumption \eqref{1401:eq001}.

It would be desirable to remove the condition \eqref{1401:eq001}
and have a cleaner statement in Theorem \ref{T2}.
In our proof, \eqref{1401:eq001} is used to infer
that none of $x_1,\dots,x_n$ can be zero. Because of this,
we find a simple differential equation related to $x_1,\dots,x_n$
and then use a recursive argument to pass from $n$ to $n-1$.
When one of the variables vanishes, the complexity in the recursive step
drastically increases and there seems not to be such a simple solution
as in the former situation.

We can do better than \eqref{eq:1.21} if we assume that, for all $i\neq j$, the numbers $(x_i-x_j)^2$
are distinct. If this is the case, we can apply Siegel's theorem \cite[Theorem II]{siegel}
and deduce, for $n$ large enough, the lower bound
\begin{equation}\label{eq:Esiegel}
E\geq \lambda\binom{n}{2},
\end{equation}
where $\lambda$ is any number strictly less than $\lambda_0=e(1+\vartheta^{-1})^{-\vartheta}=1.7336105..$,
with $\vartheta$ being the unique positive root of the transcendental equation
\[
(1+\vartheta)\log(1+\vartheta) - \frac{\log\vartheta}{1+\vartheta} = 1.
\]
We collect these thoughts in a proposition.
\begin{proposition}\label{P4}
Let $x_1,\dots,x_n>0$ be the roots of a
monic irreducible polynomial
and assume that, for all $i\neq j$,
the numbers $(x_i-x_j)^2$ are distinct. Then, for $n$ large enough,
the lower bound \eqref{eq:Esiegel} holds.
\end{proposition}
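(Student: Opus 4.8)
The plan is to recognize the $\binom{n}{2}$ summands of $E$ as a set of distinct totally positive algebraic integers and then invoke Siegel's Theorem~II, which bounds the arithmetic mean of such a set from below by $\lambda_0-o(1)$.

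First I would check that each $(x_i-x_j)^2$ is a totally positive algebraic integer. Since $x_1,\dots,x_n$ are the roots of a monic irreducible polynomial $f\in\ZZ[x]$, each is an algebraic integer, and hence so is $(x_i-x_j)^2=x_i^2-2x_ix_j+x_j^2$. For total positivity, note that any embedding $\sigma$ of the splitting field into $\mathbb{C}$ permutes $\{x_1,\dots,x_n\}$ (the roots of the irreducible $f$ form a single Galois orbit), and all roots are positive reals by hypothesis; therefore $\sigma((x_i-x_j)^2)=(\sigma x_i-\sigma x_j)^2$ is again a positive real. Thus every conjugate of $(x_i-x_j)^2$ is positive.

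Next I would verify that the multiset $\{(x_i-x_j)^2:i<j\}$ is stable under the Galois group, since any automorphism permutes the unordered pairs $\{i,j\}$ and hence these values. Consequently $\prod_{i<j}(x-(x_i-x_j)^2)$ is monic with coefficients in $\ZZ$, and by the distinctness hypothesis it is separable. Hence the $(x_i-x_j)^2$ form exactly $N=\binom{n}{2}$ distinct totally positive algebraic integers (a disjoint union of complete conjugacy classes), which is precisely the input required by Siegel's Theorem~II. Applying it gives $\frac{1}{N}\sum_{i<j}(x_i-x_j)^2\geq\lambda_0-o(1)$ as $N\to\infty$; since $N=\binom{n}{2}\to\infty$ with $n$, for any fixed $\lambda<\lambda_0$ and all $n$ large enough we obtain $E\geq\lambda\binom{n}{2}$, which is \eqref{eq:Esiegel}.

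The main obstacle is conceptual rather than computational: one must use the distinctness hypothesis to guarantee that the number of distinct totally positive algebraic integers among the $(x_i-x_j)^2$ is exactly $\binom{n}{2}$. Were there coincidences among these values, the set would collapse to fewer than $\binom{n}{2}$ elements, and Siegel's bound---being a statement about the average of \emph{distinct} numbers---would only yield $E\geq\lambda\cdot(\text{number of distinct values})$, which is weaker. All of the analytic content lies in Siegel's Theorem~II itself; the work here is the algebraic bookkeeping that legitimizes its application, together with the observation that the relevant count tends to infinity with $n$.
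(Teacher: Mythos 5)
Your proposal is correct and follows exactly the route the paper intends: view $E$ as the trace of the monic integer polynomial $\prod_{i<j}\bigl(x-(x_i-x_j)^2\bigr)$, use the distinctness hypothesis to guarantee it is separable of degree $\binom{n}{2}$ with totally positive roots, and invoke Siegel's Theorem~II. The Galois-stability and total-positivity checks you supply are precisely the bookkeeping the paper leaves implicit.
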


There are totally positive polynomials (i.e.~with only positive real roots)
for which the assumption in Proposition \ref{P4}
does not hold, one such example is the polynomial $((x-2)^2-2)((x-2)^2-3)$.
However, when the polynomial is irreducible and its trace is smaller than twice
the degree $n$, we found no counterexample for small $n$.
By a theorem of Luca \cite[Theorem 1.2]{luca}, the assumption holds
if the Galois group of the polynomial is $4$-transitive.
In particular, it is true if the Galois group is the full symmetric group.
Of the $896$ irreducible totally positive polynomials of degree $n<10$
and trace less than $2n$, $855$ of these have the full symmetric group
as their Galois group.

Even stronger bounds than \eqref{eq:Esiegel} can be obtained by the method
of auxiliary polynomials introduced by Smyth \cite{smyth},
once we observe that the potential energy is the trace of the polynomial with roots
$(x_i-x_j)^2$. The best constant in the trace problem has been obtained
with Smyth's method and is $1.793145$, due to Wang--Wu--Wu \cite{www}.

Finally, we conclude the paper by showing that Theorem \ref{T2}
can be extended to all the potentials of the form
\begin{equation}\label{class}
F(x_1,\dots,x_n) = \frac{a}{n}\sum_{i=1}^{n}x_i^2 + \frac{b}{n^2}\Bigl(\sum_{i=1}^nx_i\Bigr)^2 + \frac{c}{n}\sum_{i=1}^nx_i + d,
\end{equation}
with $a,b,c,d\in\RR$ and $a>0$. Clearly, the quadruple $(a,b,c,d)=(n^2,-n^2,0,0)$ recovers the potential energy $E$.
\begin{theorem}\label{T1.7}
Let $F$ be as in \eqref{class} and let $S_1,S_2>0$ such that \eqref{1401:eq001} holds.
Let $x_1,\dots,x_n$ be positive real numbers satisfying $x_1+\cdots+x_n=S_1$
and $x_1^2+\cdots+x_n^2=S_2$. Denote by $\Delta$ be the discriminant
of $x_1,\dots,x_n$. Then
\[
F(x_1,\dots,x_n)
\geq
\binom{n}{2}\frac{2a}{n}
\left(\frac{\Delta}{Y(n)}\right)^{1/\binom{n}{2}} + (a+b)\frac{S_1^2}{n^2} + \frac{cS_1}{n} +d.
\]
\end{theorem}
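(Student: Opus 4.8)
The plan is to observe that, on the constraint surface defined by $x_1+\cdots+x_n=S_1$ and $x_1^2+\cdots+x_n^2=S_2$, the potential $F$ is in fact \emph{constant}, and then to reduce the desired inequality directly to Theorem~\ref{T2}. First, substituting $\sum_i x_i^2 = S_2$ and $\sum_i x_i = S_1$ into \eqref{class}, the function $F$ collapses to
\[
F(x_1,\dots,x_n) = \frac{a}{n}S_2 + \frac{b}{n^2}S_1^2 + \frac{c}{n}S_1 + d,
\]
which depends only on the fixed data $S_1,S_2$ and not on the individual $x_i$. Using the identity $E = nS_2 - S_1^2$ recalled just before Theorem~\ref{T2}, one can write $S_2 = (E+S_1^2)/n$, and hence express the quadratic part as $\frac{a}{n}S_2 = \frac{a}{n^2}E + \frac{a}{n^2}S_1^2$. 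Collecting terms yields
\[
F = \frac{a}{n^2}E + (a+b)\frac{S_1^2}{n^2} + \frac{cS_1}{n} + d.
\]

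Next I would invoke Theorem~\ref{T2}. Taking $\binom{n}{2}$-th roots in the conclusion of that theorem gives the equivalent lower bound
\[
E \geq \binom{n}{2}\, 2n \left(\frac{\Delta}{Y(n)}\right)^{1/\binom{n}{2}}.
\]
Since $a>0$, multiplying this inequality by $a/n^2$ preserves its direction and produces precisely
\[
\frac{a}{n^2}E \geq \binom{n}{2}\frac{2a}{n}\left(\frac{\Delta}{Y(n)}\right)^{1/\binom{n}{2}}.
\]
Substituting this into the expression for $F$ above gives the claimed inequality.

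The point to emphasize is that there is essentially no obstacle once Theorem~\ref{T2} is available: because $F$ is affine in $S_2$, and hence in $E$, with the strictly positive leading coefficient $a/n^2$, the whole statement is merely a rescaling of Theorem~\ref{T2}. The hypothesis \eqref{1401:eq001} enters only insofar as it is the hypothesis of Theorem~\ref{T2} (guaranteeing that all $x_i$ stay positive), and the sign condition $a>0$ is exactly what allows the inequality on $E$ to transfer to an inequality on $F$; were one to drop $a>0$, the direction of the bound would reverse, which is why the restriction $a>0$ is the natural one.
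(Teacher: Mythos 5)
Your proof is correct and is essentially the paper's argument: the paper likewise uses that $F$ restricted to the constraint set is an affine function of $S_2$ (equivalently of the energy, via $E=nS_2-S_1^2$) with positive coefficient $a/n$, and then transfers the lower bound on $E$ in terms of $\Delta$. The only cosmetic difference is that you invoke Theorem~\ref{T2} as a black box, while the paper re-derives the same bound through the Lagrange multiplier $\lambda$ from its proof of that theorem; the mathematical content is identical.
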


The paper is organised as follows: in Section \ref{S2} we sketch a proof of Proposition \ref{P1}
and of Corollary \ref{C1}. Then we move to the proofs of Theorem \ref{T1} and Theorem~\ref{T1max},
which span across Sections \ref{S3} and \ref{S4}. Finally, in Section \ref{S5}
we prove Theorem \ref{T2} and Theorem \ref{T1.7}.

\subsection*{Acknowledgements}
This work was supported by Czech Science Foundation GACR, grant 21-00420M,
the project PRIMUS/20/SCI/002 from Charles University
and Charles University Research Centre program UNCE/SCI/022.

\section{Considerations about Hunter's paper}\label{S2}

In his paper, Hunter found the maximum of the potential energy $E$
over all positive real numbers with fixed trace and norm \cite[Lemma~1]{hunter}.
In this section we sketch a proof of Proposition \ref{P1},
showing how the minimum of $E$ can be derived from Hunter's paper,
and of Corollary \ref{C1},
which essentially reverses the inequality between arithmetic mean and geometric mean.

Fix $s,p>0$ with $s^n>p$ and consider the set
\[
M = \{(x_1,\dots,x_n)\in\RR_+^n:\; x_1+\cdots+x_n=ns,\; x_1\cdots x_n=p\},
\]
which is an $(n-2)$-dimensional manifold in $\RR_+^n$.
As explained in \cite[p.150]{hunter}, by an appropriate choice
of variables we can construct a local chart from $M$ to a closed region $D$ in $\RR_+^{n-2}$
in such a way that the maximum and minimum of $E$ are interior points in $D$.
This implies that the minimum of $E$ on $M$ can be found among the critical points
of the function
\[
\sum_{1\leq i < j \leq n} (x_i-x_j)^2 + \lambda(x_1+\cdots+x_n) + \mu\log(x_1\cdots x_n).
\]
Differentiating with respect to a given variable $x_i$, we find that we must have
\[
x_i^2 + \left(\frac{\lambda}{2n}-s\right)x_i + \frac{\mu}{2n} = 0, \qquad i=1,\dots,n,
\]
and so $x_i$ satisfies a quadratic equation.
Therefore, at the point of minimum we must have $k$ of the $x_i$'s
equal in value to a first number $x$, and the remaining $n-k$
equal to a second number $y$, say.
Without loss of generality, we can assume $1\leq k\leq \lfloor n/2\rfloor$.
We obtain
\begin{equation}\label{2408:eq001}
\begin{cases}
kx+(n-k)y = ns\\
x^ky^{n-k} = p
\end{cases}
\end{equation}
and 
\[
E = k(n-k)(x-y)^2.
\]
Eliminating one variable using the trace condition in \eqref{2408:eq001}
and making the change of variable $y=(1-\alpha)s$, we deduce
\begin{equation}\label{eq002}
\frac{p}{s^n} = \left(1+\alpha\left(\frac{n}{k}-1\right)\right)^k (1-\alpha)^{n-k},
\end{equation}
where $\alpha\in(-\frac{k}{n-k},1)$.
Solving for $\alpha$, we find \cite[p.152]{hunter}
that for every $k$ there are exactly two solutions
$\alpha_1,\alpha_2$ of \eqref{eq002} in the desired interval,
and they satisfy
\[
-\frac{k}{n-k} < \alpha_1 < 0< \alpha_2 < 1.
\]
In addition, Hunter showed \cite[Lemma 2]{hunter} that we have
$\alpha_2\geq |\alpha_1|$.
In terms of $\alpha$, the energy is given by
\begin{equation}\label{2408:eq002}
E = \left(\frac{n}{k}-1\right)(ns)^2\alpha^2.
\end{equation}
Therefore, when seeking the minimum of $E$, it suffices to evaluate
\eqref{2408:eq002} when $\alpha=\alpha_1$ and $1\leq k\leq \lfloor n/2\rfloor$.
In analogy to \cite[Lemma 3]{hunter}, we prove that the smallest value
is obtained when $k=1$.
\begin{lemma}\label{S2:lemma}
Let $\alpha$ be the solution of \eqref{eq002} in $(-\frac{k}{n-k},0)$.
If we set
\(
u_k = \alpha^2\left(\frac{n}{k}-1\right),
\)
then
\begin{equation}\label{eq003}
\min_{k=1,\dots,\lfloor n/2\rfloor} u_k = u_1.
\end{equation}
\end{lemma}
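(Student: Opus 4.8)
The plan is to treat $k$ as a continuous variable on $[1,n/2]$ and to show that $u_k$ is strictly increasing, so that its minimum over the integers $1\le k\le\lfloor n/2\rfloor$ is attained at $k=1$. The guiding idea is to decouple the two competing effects: raising the energy while keeping $k$ fixed, versus changing $k$ while keeping the energy fixed. To set this up, for a fixed energy level $u\ge0$ and a real $k\in[1,n/2]$ I put $a=\sqrt{u(n-k)/k}$ and $b=\sqrt{uk/(n-k)}$ and define
\[
\Phi(k,u)=k\log(1-a)+(n-k)\log(1+b),
\]
which is the logarithm of the right-hand side of \eqref{eq002} evaluated at the negative root $\alpha=-b$. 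Indeed, writing $\alpha_1=-b$ one checks $\alpha_1^2\bigl(\tfrac nk-1\bigr)=u$ and $1+\alpha_1\bigl(\tfrac nk-1\bigr)=1-a$, so that by the definition of $u_k$ we have $\Phi(k,u_k)=\log(p/s^n)$ for every $k$; moreover $a<1$ exactly because $\alpha_1\in(-\tfrac{k}{n-k},0)$.

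Since $\Phi(k,0)=0$, since $\Phi(k,u)\to-\infty$ as $a\to1^-$, and since $\log(p/s^n)<0$, for each fixed $k$ there is a unique $u_k\in(0,\tfrac{k}{n-k})$ solving $\Phi(k,u_k)=\log(p/s^n)$, and the implicit function theorem gives
\[
\frac{du_k}{dk}=-\frac{\partial_k\Phi}{\partial_u\Phi}.
\]
It therefore suffices to pin down the signs of the two partial derivatives. For the first, a direct computation using the identity $ka=(n-k)b=\sqrt{uk(n-k)}$ gives
\[
\partial_u\Phi=-\frac{\sqrt{uk(n-k)}}{2u}\cdot\frac{a+b}{(1-a)(1+b)}<0,
\]
which is just the quantitative statement that, with $k$ fixed, increasing the energy decreases the product (a sharpening of the arithmetic--geometric mean inequality). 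For the second, the same substitutions yield
\[
\partial_k\Phi=\log\frac{1-a}{1+b}+\frac{a+b}{2}\Bigl(\frac{1}{1-a}+\frac{1}{1+b}\Bigr).
\]

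The crux is to show $\partial_k\Phi>0$. The decisive move is the substitution $t=\frac{1+b}{1-a}>1$, which collapses this two-variable expression into the single-variable function $\tfrac12(t-t^{-1})-\log t$. The latter vanishes at $t=1$ and has derivative $\tfrac12(1-t^{-1})^2\ge0$, hence is positive for $t>1$; this gives $\partial_k\Phi>0$. Combining the two signs yields $du_k/dk>0$, so $u_k$ is strictly increasing and the lemma follows. I expect the only genuinely delicate step to be this second inequality: attacked head-on it is unpleasant (its Taylor expansion in $\sqrt u$ vanishes to second order and only becomes positive at third order), and it is the reduction to the elementary convexity fact $\tfrac12(t-t^{-1})\ge\log t$ for $t\ge1$ that makes it transparent. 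I would also be careful to record the admissible ranges ($0<p<s^n$ and $a<1$) so that $\Phi$ and the implicit solution $u_k$ are well defined throughout $[1,n/2]$.
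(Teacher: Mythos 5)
Your proof is correct and follows essentially the same route as the paper's (which itself follows Hunter): implicit differentiation of the constraint with respect to the continuous parameter $k$ (the paper uses $m=\tfrac{n}{k}-1$ instead), reducing everything to the sign of the very same auxiliary function $\tfrac12\bigl(t-\tfrac1t\bigr)-\log t$ — your $t=\tfrac{1+b}{1-a}>1$ is the reciprocal of the paper's $t=\tfrac{1+\alpha m}{1-\alpha}\in(0,1)$, so your inequality $B(t)>0$ for $t>1$ is literally the paper's $B(t)<0$ for $t\in(0,1)$. Your write-up is more detailed and self-contained than the paper's two-line sketch, but no new idea is introduced.
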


\begin{proof}
Following Hunter's argument \cite[p.153--154]{hunter},
we make the change of variable $m=\frac{n}{k}-1$. After a few steps,
the problem reduces to showing that for $\alpha\in(-1/m,0)$ and
$1\leq m\leq n-1$, we have
\[
B(t) = \frac{1}{2}\left(t-\frac{1}{t}\right) - \log t <0,\quad \text{where } t=\frac{1+\alpha m}{1-\alpha}\in (0,1).
\]
Since $B$ is strictly increasing and $B(1)=0$, we obtain the lemma.
\end{proof}

To prove Proposition \ref{P1}, we combine \eqref{2408:eq002} and Lemma \ref{S2:lemma},
obtaining that the minimum of the potential energy on $M$
is $(n-1)(ns)^2\alpha^2$, where $\alpha$ is the root in $(-\frac{1}{n-1},0)$ of the equation
\begin{equation}\label{2408:eq003}
(1+\alpha(n-1)(1-\alpha)^{n-1} = s^{-n}p.
\end{equation}
For all the other points on $M$ we have $E\geq (n-1)(ns)^2\alpha^2$, as claimed.

As for the proof of Corollary \ref{C1}, denote by $E_0$ the above minimum
and let $(x_1,\dots,x_n)$ be a point on $M$, with energy $E\geq E_0$.
Since
\[
E_0=(n-1)(ns)^2\alpha^2
\quad\text{and}\quad
E=(n-1)(ns)^2\beta^2,
\]
with $\alpha,\beta$ taken in $(-\frac{1}{n-1},0)$, it follows $\beta\leq \alpha$.
The function
\[
f(\beta) = \frac{1}{(1+\beta(n-1))(1-\beta)^{n-1}}
\]
is strictly decreasing in the interval $(-\frac{1}{n-1},0)$
since $f'(\beta)<0$ in this range. Thus
\[
\frac{s^n}{p} = f(\alpha) \leq f(\beta),
\]
which gives the corollary.

\section{Potential energy and power sums}\label{S3}

We turn now to Theorems \ref{T1} and \ref{T1max}.
In this section we do an initial analysis of the potential energy
by using the method of Lagrange multipliers,
which allows us to pass to a one-dimensional problem.
After that, we prove a number of auxiliary lemmas
that will be useful to solve such a problem.
The proofs of the theorems will then be completed in Section \ref{S4}.

Fix $S_1,S_r$ as in Theorem~\ref{T1} and Theorem~\ref{T1max}
and let $M_{r,n}$ be the set of points $(x_1,\dots,x_n)\in\RR_{\geq 0}^n$ satisfying
\begin{equation}\label{2508:eq001}
x_1+\cdots+x_n = S_1,\quad x_1^r+\cdots+x_n^r = S_r.
\end{equation}
Our goal is to maximize (and minimize) the energy $E=E(x_1,\dots,x_n)$ over $M_{r,n}$.
If the point of maximum is on $M_{r,n}\cap\partial\RR^n_{\geq 0}$, then
at least one of $x_1,\dots,x_n$ vanishes. More precisely,
there will be an integer $j$, with $1\leq j\leq n-1$,
such that $x_1,\dots,x_{n-j}\neq 0$ and $x_{n-j+1}=\cdots=x_n=0$.
Note that
\[
E(x_1,\dots,x_{n-j},0,\dots,0)= E(x_1,\dots,x_{n-j}) + j \sum_{i=1}^{n-j}x_i^2.
\]
Recalling from the introduction the identity $E=nS_2-S_1^2$,
we obtain
\begin{equation}\label{2909:eq004}
E(x_1,\dots,x_{n-j},0,\dots,0)= \frac{n}{n-j} E(x_1,\dots,x_{n-j}) + \frac{jS_1^2}{n-j}.
\end{equation}
Since $S_1$ is fixed, the problem reduces to finding the maximum of $E$
over $M_{r,n-j}$, with the additional information that at the point of
maximum we have $x_i>0$ for every $i$. An identical argument works for the minimum of $E$.

From now on, let us assume that the point of maximum and the point of minimum
are in $M_{r,n}\cap\RR^{n}_{+}$.
In Section~\ref{S4} we will return to the case when we have zeros and
will use \eqref{2909:eq004} to determine the global maximum
and the global minimum of $E$.

Furthermore, if $S_1^r=n^{r-1}S_r$, then $M_{r,n}$ reduces to the single point
$x_1=\cdots=x_n=n^{-1}S_1$, in which case $E=0$ and there is nothing to prove.
We assume therefore $S_1^r<n^{r-1}S_r$ and
search for the extremal values of $E$ by the method of Lagrange multipliers.

\subsection{Lagrange multipliers}
The functions
\[
F(x_1,\dots,x_n)=x_1+\cdots+x_n
\quad \text{and}\quad
G(x_1,\dots,x_n)=x_1^r+\cdots+x_n^r
\]
have gradients
\[
\begin{split}
\nabla F = (1,\dots,1),\quad \nabla G=r(x_1^{r-1},\dots,x_n^{r-1}).
\end{split}
\]
Since $r>1$, $\nabla F$ and $\nabla G$ are linearly independent unless
$x_1=x_2=\cdots=x_n$, which is excluded on $M_{r,n}$ because we are assuming
$S_1^r<n^{r-1}S_r$.
Therefore, by the Lagrange multipliers theorem,
the maximum and minimum of the potential energy
on $M_{r,n}$ can be found among the critical points of the function
\[
\sum_{1\leq i<j\leq n} (x_i-x_j)^2 + \lambda(x_1+\cdots+x_n) + \mu(x_1^r+\cdots+x_n^r).
\]
Differentiating with respect to each variable gives
\begin{equation}\label{2508:eq002}
2n(x_i-S_1) + \lambda + r\mu x_i^{r-1} = 0,\qquad i=1,\dots,n.
\end{equation}

The above is a polynomial equation of degree $r-1$
and is non-degenerate (i.e.~$\mu\neq 0$) since otherwise
$x_1,\dots,x_n$ would all be equal to a single value, which we have excluded.
More precisely, we know that \eqref{2508:eq002} must have at least two
distinct positive solutions.
At the same time, by Descarte's rule of signs, we know that trinomials
can have at most two positive roots. Hence, the equation in \eqref{2508:eq002}
has exactly two positive solutions.

In other words, if $x_i$ is a root of \eqref{2508:eq002},
then it equals one of two positive values, say $x$ or $y$.
We can therefore write, for some integer $1\leq k\leq n-1$,
\begin{equation}\label{2608:eq003}
\begin{cases}
S_1 = kx+(n-k)y,\\
S_r = kx^r+(n-k)y^r
\end{cases}
\end{equation}
and
\[
E = k(n-k)(x-y)^2.
\]
Without loss of generality, we can assume $k\leq \lfloor n/2\rfloor$.
Making the change of variable $ny=(1-\alpha)S_1$, with $\alpha\in(-\frac{k}{n-k},1)$,
and solving for $x$ in the first equation of \eqref{2608:eq003}, we obtain
\begin{equation}\label{2011:eq001}
\begin{cases}
nx = S_1(1+\alpha(\frac{n}{k}-1)),\\
k(1+\alpha(\frac{n}{k}-1))^r + (n-k)(1-\alpha)^r = \displaystyle\frac{n^rS_r}{S_1^r}
\end{cases}
\end{equation}
and
\begin{equation}\label{2011:eq002}
E = \left(\frac{n}{k}-1\right) \alpha^2 S_1^2.
\end{equation}

Solutions $\alpha$ of the second equation in \eqref{2011:eq001}
will give extremal values of the energy.
In general, we would perhaps expect to find two solutions
corresponding to a local maximum and a local minimum.
Of course, sometimes we could have only one of the two (or none).
In Lemma~\ref{S2:lemma1} below we make this argument precise.

Set
\begin{equation}\label{def:g}
g(\alpha) := k(1+\alpha(\tfrac{n}{k}-1))^r + (n-k)(1-\alpha)^r - \frac{n^rS_r}{S_1^r}.
\end{equation}
Notice that $g(0)=n(1-n^{r-1}S_rS_1^{-r})<0$ since $S_1^r<n^{r-1}S_r$. Also,
\begin{equation}\label{2909:eq005}
g'(\alpha) = (n-k)r((1+\alpha(\tfrac{n}{k}-1)^{r-1}-(1-\alpha)^{r-1}),
\end{equation}
which shows that $g$ is strictly decreasing in $(-\frac{k}{n-k},0)$
and strictly increasing in $(0,1)$.
In particular, we can have at most one solution of the equation $g(\alpha)=0$
in each of the two intervals. Depending on the location of $k$
relative to the number $\tilde{n}$ defined in~\eqref{def:ntilde},
we can find solutions of $g(\alpha)=0$ to the left or to the right of zero.

\begin{lemma}\label{S2:lemma1}
The equation $g(\alpha)=0$ has a solution $\alpha_1$ in $(-\frac{k}{n-k},0)$
if and only if $k>n-\ntilde$.
Similarly, there is a solution $\alpha_2\in(0,1)$ if and only if $k<\ntilde$.
For those $k$ such that both $\alpha_1$ and $\alpha_2$ are defined,
we have $|\alpha_1|\geq \alpha_2$.
\end{lemma}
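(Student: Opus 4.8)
The plan is to treat the two existence assertions and the comparison $|\alpha_1|\ge\alpha_2$ separately, in each case reading off the behaviour of $g$ from its monotonicity \eqref{2909:eq005} together with the value $g(0)<0$.

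For the first two assertions I would simply evaluate $g$ at the endpoints of the domain. At $\alpha=-\tfrac{k}{n-k}$ the factor $1+\alpha(\tfrac nk-1)$ vanishes and $1-\alpha=\tfrac{n}{n-k}$, so $g(-\tfrac{k}{n-k})=\tfrac{n^r}{(n-k)^{r-1}}-\tfrac{n^rS_r}{S_1^r}$; using $S_rS_1^{-r}=\ntilde^{-(r-1)}$ from \eqref{def:ntilde} this is positive exactly when $(n-k)^{r-1}<\ntilde^{r-1}$, i.e.\ when $k>n-\ntilde$. Since $g$ is strictly decreasing on $(-\tfrac{k}{n-k},0)$ and $g(0)<0$, a (necessarily unique) root $\alpha_1$ lies in that interval precisely under this condition. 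Symmetrically, $g(1)=\tfrac{n^r}{k^{r-1}}-\tfrac{n^rS_r}{S_1^r}>0$ iff $k<\ntilde$, and as $g$ is strictly increasing on $(0,1)$ with $g(0)<0$, this is exactly the condition for a root $\alpha_2\in(0,1)$.

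For the comparison I would write $g(\alpha)=k\,\phi(\alpha)-n^rS_rS_1^{-r}$ with $\phi(\alpha)=(1+m\alpha)^r+m(1-\alpha)^r$ and $m=\tfrac nk-1=\tfrac{n-k}{k}\ge1$, where $m\ge1$ uses the standing normalisation $k\le\lfloor n/2\rfloor$. Then $g(\alpha_1)=g(\alpha_2)=0$ is the same as $\phi(\alpha_1)=\phi(\alpha_2)$. I would establish two properties of $\phi$: first, $\phi$ is strictly increasing on $(0,\infty)$, which follows from $\phi'(\alpha)=rm[(1+m\alpha)^{r-1}-(1-\alpha)^{r-1}]$ and the elementary fact that $1+m\alpha>|1-\alpha|\ge0$ for $\alpha>0$; second, $\phi(\beta)\ge\phi(-\beta)$ for every $\beta>0$. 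Granting these, I apply the second property with $\beta=-\alpha_1>0$ to get $\phi(-\alpha_1)\ge\phi(\alpha_1)=\phi(\alpha_2)$, and then, since $-\alpha_1$ and $\alpha_2$ both lie in $(0,\infty)$ where $\phi$ is strictly increasing, I conclude $-\alpha_1\ge\alpha_2$, which is the claim. Reflecting $\alpha_1$ (rather than $\alpha_2$) is what keeps the comparison entirely inside $(0,\infty)$ and avoids having to locate $\alpha_2$ relative to $\tfrac{k}{n-k}$.

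The only real obstacle is the symmetry inequality $\phi(\beta)\ge\phi(-\beta)$. I expect to prove it by expanding $\phi(\beta)-\phi(-\beta)=q(m\beta)-m\,q(\beta)$, where $q(t)=(1+t)^r-(1-t)^r=2\sum_{j\ \mathrm{odd}}\binom{r}{j}t^j$, which gives $q(m\beta)-m\,q(\beta)=2\sum_{j\ \mathrm{odd}}\binom{r}{j}\beta^j(m^j-m)$; this is $\ge0$ term by term because $\beta>0$ and $m^j\ge m$ for every odd $j\ge1$ when $m\ge1$. Everything else is routine monotonicity bookkeeping.
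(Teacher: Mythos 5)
Your proof is correct and follows essentially the same route as the paper: endpoint evaluation of $g$ (using $g(0)<0$ and the monotonicity from \eqref{2909:eq005}) for the two existence claims, and the reflection inequality $\phi(\beta)\geq\phi(-\beta)$ for the comparison $|\alpha_1|\geq\alpha_2$. The only cosmetic difference is that you prove the reflection inequality by expanding $\phi(\beta)-\phi(-\beta)=q(m\beta)-m\,q(\beta)$ directly into odd binomial terms with coefficients $m^j-m\geq 0$, whereas the paper differentiates the corresponding function $H$ and expands $H'$ into even terms with coefficients $m^{2j}-1\geq 0$; both reduce to the same elementary fact about $m\geq 1$.
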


\begin{proof}
Since $g$ is decreasing to the left of zero
and $g(0)<0$, we deduce that $g$ has a root $\alpha_1$ in $(-\frac{k}{n-k},0)$
if and only if $g(-\frac{k}{n-k})>0$, which is equivalent to $k>n-\ntilde$.
Similarly, since $g$ is increasing to the right of zero,
we deduce that $g$ has a root $\alpha_2$ in $(0,1)$
if and only if $g(1)>0$, which is equivalent to $k<\ntilde$.

Assume now that both $\alpha_1$ and $\alpha_2$ are defined.
Since $g$ is increasing in $(0,1)$,
the last claim will follow if we can show $g(\alpha_2)\leq g(-\alpha_1)$.
By the identity $g(\alpha_2)=g(\alpha_1)$,
this amounts to proving $g(\alpha_1)\leq g(-\alpha_1)$.
In other words, we want to prove that, for $\alpha$ in $(-\tfrac{k}{n-k},0)$, we have
\[
H(\alpha):= (1+\alpha m)^r + m(1-\alpha)^r - (1-\alpha m)^r - m(1+\alpha)^r \leq 0,
\]
where we use the shorthand $m=\frac{n}{k}-1$ ($m\geq 1$ since $k\leq n/2$).
Notice that $H(0)=0$ and
\[
\frac{H'(\alpha)}{mr} = (1+\alpha m)^{r-1}+(1-\alpha m)^{r-1}-(1-\alpha)^{r-1}-(1+\alpha)^{r-1}.
\]
Expanding the powers, the above gives
\[
\frac{H'(\alpha)}{mr} = 2 \sum_{0<2j\leq r-1} \binom{r-1}{2j} \alpha^{2j}(m^{2j}-1) \geq 0.
\]
Therefore $H$ is non-decreasing and $H(\alpha)\leq 0$ for $\alpha\leq 0$, as desired.
\end{proof}

\subsection{Auxiliary lemmas}
As we saw in \eqref{2011:eq002}, the potential energy
can be expressed directly in terms of $\alpha$; its extremal values
are obtained by picking $\alpha$ as one of the roots $\alpha_1,\alpha_2$ of the equation $g(\alpha)=0$.
Depending on the choice of the root,
we get two distinct functions that depend indirectly
on $k$ and $n$ through $\alpha_1,\alpha_2$. Set
\begin{equation}\label{0209:eq001}
U(k,n) := \alpha_1^2\left(\frac{n}{k}-1\right),
\quad
V(k,n) := \alpha_2^2\left(\frac{n}{k}-1\right).
\end{equation}
In the remainder of this section we study the behaviour of the functions $U$ and $V$.
First, we show that they are both monotonic functions of $k$.

\begin{lemma}\label{S2:lemma2}
The function $U$ is decreasing in $k$.
The function $V$ is increasing in $k$.
\end{lemma}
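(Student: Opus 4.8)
The plan is to treat $k$ as a continuous parameter and establish strict monotonicity by computing the relevant derivatives through implicit differentiation of the defining equation $g(\alpha)=0$. It is cleaner to pass to the variable $m=\frac{n}{k}-1$, which is a strictly decreasing function of $k$ (with $m\geq 1$ since $k\leq n/2$); thus ``$U$ decreasing in $k$'' is equivalent to ``$U$ increasing in $m$'', and ``$V$ increasing in $k$'' to ``$V$ decreasing in $m$''. Using $n-k=km$ together with $\ntilde^{r-1}=S_1^rS_r^{-1}$, the equation $g(\alpha)=0$ rewrites purely in $\alpha$ and $m$ as
\[
(1+\alpha m)^r + m(1-\alpha)^r = (m+1)c, \qquad c:=(n/\ntilde)^{r-1},
\]
where $c$ is independent of $k$. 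Viewing $\alpha=\alpha(m)$ as the implicit function defined by this relation, and abbreviating $A=1+\alpha m$, $B=1-\alpha$ (both positive throughout the ranges of $\alpha_1$ and $\alpha_2$), I would first extract $\tfrac{d\alpha}{dm}$ from the differentiated constraint and then substitute it into $\tfrac{d}{dm}(m\alpha^2)=\alpha^2+2m\alpha\tfrac{d\alpha}{dm}$.

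The key simplification is to use the constraint itself to remove $c$, via the identity $c-B^r=\frac{A^r-B^r}{m+1}$. After this elimination the derivative collapses to
\[
\frac{d}{dm}(m\alpha^2) = \frac{N}{(m+1)\,r\,(A^{r-1}-B^{r-1})}, \qquad N := 2\alpha(A^r-B^r)-r(m+1)\alpha^2(A^{r-1}+B^{r-1}).
\]
The denominator carries the sign of $A^{r-1}-B^{r-1}$, which is negative at $\alpha_1<0$ (where $A<B$) and positive at $\alpha_2>0$ (where $A>B$). Hence, accounting for the orientation reversal between $m$ and $k$, both claimed monotonicities reduce to the single inequality $N<0$. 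At this point the decisive observation is the identity $A-B=\alpha(m+1)$, which converts $r(m+1)\alpha^2$ into $r\alpha(A-B)$ and yields the factorization
\[
N=\alpha\bigl[2(A^r-B^r)-r(A-B)(A^{r-1}+B^{r-1})\bigr]=\alpha\,\Phi(A,B).
\]

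Since $\Phi$ is homogeneous of degree $r$, setting $w=A/B>0$ gives $\Phi=B^r\phi(w)$ with $\phi(w)=(2-r)w^r+rw^{r-1}-rw+(r-2)$. I would then show that $\phi(w)>0$ on $(0,1)$ and $\phi(w)<0$ on $(1,\infty)$; since $\alpha_1<0$ corresponds to $w<1$ and $\alpha_2>0$ to $w>1$ (and $B^r>0$ in both cases), this gives $N<0$ as required. The sign of $\phi$ is the heart of the argument and is handled by one-variable calculus: writing $\phi'(w)=r\eta(w)$ one computes $\eta'(w)=(r-1)(r-2)w^{r-3}(1-w)$, so $\eta$ increases on $(0,1)$, decreases on $(1,\infty)$, and attains its maximum $\eta(1)=0$; therefore $\phi'\leq 0$ with equality only at $w=1$, so $\phi$ is strictly decreasing, and since $\phi(1)=0$ the claimed sign pattern follows. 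The main obstacle is purely the bookkeeping in the first two steps — carrying out the implicit differentiation and the elimination of $c$ so as to reach the compact form $N=\alpha\Phi$ — after which the analysis of $\phi$ is routine (for $r=3$ one simply has $\phi(w)=-(w-1)^3$).
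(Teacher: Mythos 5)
Your proposal is correct and follows essentially the same route as the paper: the same change of variable $m=\frac{n}{k}-1$, the same implicit differentiation of $g(\alpha)=0$, and a reduction to the sign of the very same polynomial $\phi(t)=(2-r)t^r+rt^{r-1}-rt+(r-2)$ on $(0,1)$ and $(1,\infty)$ (the paper handles its sign by factoring $\phi'(t)=-r(t-1)^2\sum_{j=1}^{r-2}jt^{j-1}$ rather than by your second-derivative argument, but this is immaterial). Your unified treatment of $U$ and $V$ via the single inequality $N<0$ and the explicit use of $A-B=\alpha(m+1)$ are tidy, but not a different method.
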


\begin{proof}
As in Lemma \ref{S2:lemma1}, we make the change of variable
$m=\frac{n}{k}-1$, which corresponds to $k=\frac{n}{m+1}$
and $n-k=\frac{nm}{m+1}$. To prove that $U$ is decreasing in $k$
we will show that, as a function of $m\in[1,n-1]$, the quantity
\[
U = \alpha_1^2 m
\]
has positive first derivative.
For the sake of keeping the notation light,
let us temporarily write $\alpha$ instead of $\alpha_1$.
First observe that
\begin{equation}\label{0509:eq002}
\frac{dU}{dm} = \alpha^2 + 2\alpha m \frac{d\alpha}{dm} = \alpha\left(\alpha+2m\frac{d\alpha}{dm}\right)
>0 \iff
\alpha+2m\frac{d\alpha}{dm} < 0.
\end{equation}
Next we consider the identity $g(\alpha)=0$. In terms of $m$ it reads
\[
\frac{1}{m+1}(1+\alpha m)^r + \frac{m}{m+1}(1-\alpha)^r = \frac{n^{r-1}S_r}{S_1^r}.
\]
Differentiating in $m$ we obtain
\[
-\frac{(1+\alpha m)^r}{(m+1)^2} + \frac{r(1+\alpha m)^{r-1}}{m+1}\left(\alpha+m\frac{d\alpha}{dm}\right)
+\frac{(1-\alpha)^r}{(m+1)^2} - \frac{rm(1-\alpha)^{r-1}}{m+1} \frac{d\alpha}{dm} = 0.
\]
Solving for $\frac{d\alpha}{dm}$ gives
\[
m\frac{d\alpha}{dm} = \frac{(1+\alpha m)^{r-1}(1+\alpha m-\alpha r(m+1)) - (1-\alpha)^r)}{r(m+1)((1+\alpha m)^{r-1}-(1-\alpha)^{r-1})}.
\]
Using this and setting $t=(1+\alpha m)(1-\alpha)^{-1}$,
we deduce that
\begin{equation}\label{0509:eq001}
\alpha + 2m \frac{d\alpha}{dm} =
\frac{(2-r)t^r+rt^{r-1}-rt-(2-r)}{r(t+m)(t^{r-1}-1)}.
\end{equation}
Since $\alpha_1\in(-\frac{1}{m},0)$, we see that $t\in(0,1)$
and so the above fraction is well defined.
Let $B(t)$ be the numerator in \eqref{0509:eq001}.
We have $B(1)=0$ and, for $t>0$,
\[
B'(t) = -r(t-1)^2 (1+2t+3t^2+\cdots+(r-2)t^{r-3}) < 0
\]
(the factorization can be proved by induction on $r$),
which shows $B(t)>0$ for $t\in(0,1)$.
Combining \eqref{0509:eq002} and \eqref{0509:eq001} we deduce $\frac{dU}{dm}>0$,
as claimed.

As for the function $V$, we argue similarly, except that now $\alpha_2\in(0,1)$
and therefore $t\in(1,\infty)$. Since $B(t)<0$ in this range,
combining again \eqref{0509:eq002} and \eqref{0509:eq001} we deduce $\frac{dV}{dm}<0$,
which implies that $V$ is increasing in $k$.
\end{proof}

Because of the discussion at the beginning of the section, we will also need
to compare the functions $U(k,n)$ and $V(k,n)$ for different values of $n$.
To be precise, we need a slight modification of these functions.
Set
\begin{equation}\label{def:FG}
F(k,n) = \frac{V(k,n)+1}{n}
\quad\text{and}\quad
G(k,n) = \frac{U(k,n)+1}{n}.
\end{equation}
In Lemma \ref{S2:lemma3} below we show that $F$ is monotonic in $n$.
Later, in Lemma \ref{S2:lemma4}, we show that $G$ is monotonic
along diagonal lines in the $(k,n)$-plane.

\begin{lemma}\label{S2:lemma3}
The function $F(k,n)$ defined above is decreasing in $n$.
\end{lemma}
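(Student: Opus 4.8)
The key simplification is to rewrite $F$ without reference to $\alpha_2$. By \eqref{2011:eq002} the value $V(k,n)=\alpha_2^2(\tfrac{n}{k}-1)$ equals $E/S_1^2$, where $E=k(n-k)(x-y)^2$ is the energy at the critical point consisting of $k$ copies of a value $x$ and $n-k$ copies of a value $y$; since $\alpha_2>0$ we have $x>y>0$. Plugging the identity $E=nS_2-S_1^2$ (with $S_2=kx^2+(n-k)y^2$ the second power sum at this point) into \eqref{def:FG} gives $V(k,n)+1=nS_2/S_1^2$, and hence
\[
F(k,n)=\frac{kx^2+(n-k)y^2}{S_1^2}.
\]
As $S_1$ is fixed, the lemma is therefore equivalent to showing that the second power sum $kx^2+(n-k)y^2$ at the critical point is a decreasing function of $n$.

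I would then treat $n$ as a real variable (with $k$, $S_1$, $S_r$ fixed) and regard $x=x(n)$, $y=y(n)$ as the smooth functions defined implicitly by
\[
kx+(n-k)y=S_1,\qquad kx^r+(n-k)y^r=S_r
\]
on the branch $x>y>0$; these are smooth wherever $\alpha_2$ exists, since the relevant Jacobian is a negative multiple of $y^{r-1}-x^{r-1}$, which is nonzero. Differentiating both constraints in $n$ yields a linear $2\times2$ system for $x'=dx/dn$ and $y'=dy/dn$ whose determinant is $kr(n-k)(y^{r-1}-x^{r-1})<0$. Solving by Cramer's rule and substituting into
\[
\frac{d}{dn}\bigl(kx^2+(n-k)y^2\bigr)=2kxx'+y^2+2(n-k)yy',
\]
I expect everything to collapse, after factoring out $y^2$, to
\[
\frac{d}{dn}\bigl(kx^2+(n-k)y^2\bigr)=\frac{y^2\bigl(rx^{r-1}-2(r-1)xy^{r-2}+(r-2)y^{r-1}\bigr)}{r\,(y^{r-1}-x^{r-1})}.
\]

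Because the denominator is negative, it then remains to check that the bracket in the numerator is positive. Writing $t=x/y>1$ and dividing by $y^{r-1}$, this is the claim $Q(t):=rt^{r-1}-2(r-1)t+(r-2)>0$ for $t>1$. Here $Q(1)=0$ and $Q'(t)=(r-1)(rt^{r-2}-2)>0$ for $t\geq1$ (since $rt^{r-2}\geq r\geq3>2$ when $r\geq3$), so $Q$ increases on $[1,\infty)$ and stays positive for $t>1$. This forces $\frac{d}{dn}(kx^2+(n-k)y^2)<0$, giving the lemma. The structure parallels the proof of Lemma \ref{S2:lemma2}, where a similar one-variable polynomial inequality in $t=(1+\alpha m)(1-\alpha)^{-1}$ governs the sign of a derivative.

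The genuinely conceptual step is the opening reduction $F(k,n)=(kx^2+(n-k)y^2)/S_1^2$; once that is in place, the main obstacle is merely the bookkeeping in the implicit differentiation and the Cramer's-rule algebra needed to see the numerator factor exactly as above. The terminal inequality $Q(t)>0$ is elementary, so I do not anticipate any serious difficulty beyond keeping the algebra clean.
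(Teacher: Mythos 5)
Your proof is correct and follows essentially the same route as the paper's: both implicitly differentiate the defining constraints with respect to $n$ and reduce the sign of $dF/dn$ to the one-variable inequality $rt^{r-1}-2(r-1)t+(r-2)>0$ for $t>1$, which is exactly the paper's inequality $B(t)=r(t^{r-1}-1)-2(r-1)(t-1)>0$ under the identification $t=x/y=(1+\alpha m)(1-\alpha)^{-1}$. The only difference is cosmetic: you work in the coordinates $(x,y)$ after the (correct and pleasant) reformulation $F(k,n)=(kx^2+(n-k)y^2)/S_1^2$, whereas the paper differentiates the relation $g(\alpha)=0$ directly in the variable $\alpha$.
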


\begin{proof}
First, recalling the definition of $V$ in \eqref{0209:eq001}, we can write
\begin{equation}\label{2809:eq001}
F(k,n) = \frac{1}{n} + \left(\frac{1}{k}-\frac{1}{n}\right)\alpha^2,
\end{equation}
where $\alpha=\alpha(n)$ is the positive root of $g(\alpha)=0$
(and $g$ is defined in \eqref{def:g}).
Let us show that $\frac{dF}{dn}<0$. Differentiating in \eqref{2809:eq001} gives
\[
\frac{dF}{dn}= -\frac{1}{n^2}+\frac{\alpha^2}{n^2}+\left(\frac{1}{k}-\frac{1}{n}\right)2\alpha\frac{d\alpha}{dn}<0.
\]
Next we use the identity $g(\alpha)=0$ to obtain an expression for $\frac{d\alpha}{dn}$.
Since
\[
\frac{1}{n^r}\Bigl(k(1+(\tfrac{n}{k}-1)\alpha)^r+(n-k)(1-\alpha)^r\Bigr)=\frac{S_r}{S_1^r},
\]
if we differentiate in $n$ and solve for $\frac{d\alpha}{dn}$ we get
\[
\frac{d\alpha}{dn}
=
\frac{
     r(k(1+(\frac{n}{k}-1)\alpha)^r+(n-k)(1-\alpha)^r)-nr\alpha(1+(\frac{n}{k}-1)\alpha)^{r-1}-n(1-\alpha)^r
     }{
     nr(n-k)((1+(\frac{n}{k}-1)\alpha)^{r-1}-(1-\alpha)^{r-1})
     }.
\]
Setting $t=(1-\alpha)^{-1}(1+(\frac{n}{k}-1)\alpha)$ and observing that $t>1$ (since $\alpha\in(0,1)$),
it follows that
\[
\frac{dF}{dn}
=
-\frac{r(t^{r-1}-1)-2(r-1)(t-1)}{rk^2(t^{r-1}-1)(t+\frac{n}{k}-1)^2}.
\]
Now consider the function $B(t)=r(t^{r-1}-1)-2(r-1)(t-1)$. We have $B(1)=0$ and
\[
\frac{B'(t)}{r-1} = rt^{r-2} - 2 > 0
\]
since $t>1$ and $r\geq 3$. Therefore $\frac{dF}{dn}<0$ and $F$ is decreasing, as claimed.
\end{proof}

\begin{lemma}\label{S2:lemma4}
Let $k,n$ be fixed and let $G(k,n)$ be the function defined in \eqref{def:FG}.
For $\sigma\in[0,1]$, the function $G(k+\sigma,n+\sigma)$
is decreasing in $\sigma$.
In particular, we have
\[
G(k,n)\geq G(k+1,n+1).
\]
\end{lemma}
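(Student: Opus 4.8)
The plan is to exploit a drastic simplification that occurs along the diagonal. Write $K=k+\sigma$ and $N=n+\sigma$, so that the difference $c:=N-K=n-k$ stays frozen as $\sigma$ varies. Denote by $x,y$ the two positive values appearing in \eqref{2608:eq003}; with $c=N-K$ as weights, the defining relations read $Kx+cy=S_1$ and $Kx^r+cy^r=S_r$, while the energy equals $Kc(x-y)^2$. Using $U(K,N)=Kc(x-y)^2S_1^{-2}$ (from \eqref{0209:eq001} and \eqref{2011:eq002}) together with the algebraic identity $Kc(x-y)^2+(Kx+cy)^2=(K+c)(Kx^2+cy^2)=N(Kx^2+cy^2)$, I would first establish the clean formula
\[
G(K,N)=\frac{U(K,N)+1}{N}=\frac{Kx^2+cy^2}{S_1^2}.
\]
Since $S_1$ is fixed, the lemma reduces to showing that $P(K):=Kx^2+cy^2$ is decreasing in $K$ along the path, where $x=x(K)$ and $y=y(K)$ are determined by the two constraints with $c$ held constant.

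Next I would differentiate the two constraints with respect to $K$. From $x+Kx'+cy'=0$ and $x^r+rKx^{r-1}x'+rcy^{r-1}y'=0$ one solves for the derivatives, obtaining
\[
y'=\frac{(r-1)x^r}{cr(y^{r-1}-x^{r-1})},\qquad Kx'=-x-cy'.
\]
For the negative root $\alpha_1$ one has $x<y$, so $y^{r-1}-x^{r-1}>0$ and $y'>0$. Substituting these into $P'(K)=x^2+2Kxx'+2cyy'$ and simplifying with $Kx'=-x-cy'$ yields
\[
P'(K)=-x^2+2cy'(y-x)=-x^2+\frac{2(r-1)(y-x)x^r}{r(y^{r-1}-x^{r-1})}.
\]

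Finally, setting $s=y/x>1$, the desired inequality $P'(K)<0$ becomes $2(r-1)(s-1)<r(s^{r-1}-1)$, i.e.\ $B(s)>0$ where $B(s)=r(s^{r-1}-1)-2(r-1)(s-1)$. This is precisely the one-variable function already analysed in the proof of Lemma~\ref{S2:lemma3}: we have $B(1)=0$ and $B'(s)=(r-1)(rs^{r-2}-2)>0$ for $s>1$ and $r\geq 3$, so $B(s)>0$. Hence $P'(K)<0$, which gives that $G(k+\sigma,n+\sigma)$ is decreasing in $\sigma$, and in particular $G(k,n)\geq G(k+1,n+1)$.

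The main obstacle, and really the crux of the argument, is spotting the first simplification: along the diagonal the weight $c=N-K$ is constant, so $G$ collapses to the normalised second power sum $(Kx^2+cy^2)/S_1^2$. Once this is in hand the computation is short and lands on the same auxiliary function $B$ as Lemma~\ref{S2:lemma3}. Before differentiating, I would also record that the negative root $\alpha_1$ (hence $U$ and $G$) remains defined along the whole segment: by Lemma~\ref{S2:lemma1} its existence is equivalent to $N-K<\ntilde$, a condition independent of $\sigma$ since both $c=N-K$ and $\ntilde$ are fixed.
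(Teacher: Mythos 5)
Your proof is correct, and it takes a genuinely different route from the one in the paper. The paper's proof works throughout in the $\alpha$-parametrization: it differentiates the constraint \eqref{1110:eq002} implicitly in $\sigma$, solves for $d\alpha/d\sigma$, substitutes into $dG/d\sigma$, and after the substitution $t=(1-\alpha)^{-1}(1+\frac{n-k}{k+\sigma}\alpha)$ arrives at the one-variable inequality $(r-2)t^{r-1}-2(r-1)t^{r-2}+r>0$ for $t\in(0,1)$. Your argument instead begins with the observation that along the diagonal the weight $c=N-K$ is frozen, and that the algebraic identity $Kc(x-y)^2+(Kx+cy)^2=N(Kx^2+cy^2)$ collapses $G$ to the normalised second moment $(Kx^2+cy^2)/S_1^2$ of the critical two-value configuration (equivalently, $G=S_2/S_1^2$ via $E=nS_2-S_1^2$). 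Differentiating the two constraints in $K$ then gives a short computation whose endpoint, $r(s^{r-1}-1)-2(r-1)(s-1)>0$ for $s=y/x>1$, is literally the auxiliary function $B$ from the proof of Lemma~\ref{S2:lemma3}; in fact, under $t=1/s$ it is also the same (up to a factor $t^{2-r}$) as the paper's numerator in \eqref{1110:eq003}, so the two proofs converge to an identical one-variable fact. What your version buys is a conceptual explanation of why the diagonal direction is the right one to consider and a computation that avoids the rather heavy intermediate display for $d\alpha/d\sigma$; you also correctly note the point the paper leaves implicit, namely that the negative root persists along the whole segment because its existence criterion $N-K<\ntilde$ depends only on the frozen quantity $c$. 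The only items worth making explicit in a final write-up are that $x<y$ strictly (so $y^{r-1}-x^{r-1}>0$ and the implicit function theorem applies, the Jacobian determinant being $rKc(y^{r-1}-x^{r-1})\neq 0$) and that $x>0$, both of which follow from $\alpha_1\in(-\frac{K}{c},0)$.
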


\begin{proof}
With a slight abuse of notation, we set $G(\sigma)=G(k+\sigma,n+\sigma)$.
Let us show that $\frac{dG}{d\sigma}<0$, from which the lemma follows.
By definition we have
\begin{equation}\label{1110:eq001}
G(\sigma) = \frac{1}{n+\sigma}\left(1+\frac{n-k}{k+\sigma}\alpha^2\right),
\end{equation}
where $\alpha=\alpha(\sigma)$ is the root in $(-\frac{k+\sigma}{n-k},0)$ of the equation
\begin{equation}\label{1110:eq002}
\frac{(k+\sigma)}{(n+\sigma)^r}\left(1+\frac{n-k}{k+\sigma}\alpha\right)^r
+
\frac{(n-k)}{(n+\sigma)^r}(1-\alpha)^r = \frac{S_r}{S_1^r}.
\end{equation}
From \eqref{1110:eq001} we have
\[
\frac{dG}{d\sigma}
=
\frac{(n-k)}{(n+\sigma)(k+\sigma)}
\left(
2\alpha\frac{d\alpha}{d\sigma}
-
\alpha^2\left(\frac{1}{k+\sigma}+\frac{1}{n+\sigma}\right)
\right)
-
\frac{1}{(n+\sigma)^2}.
\]
Differentiating in \eqref{1110:eq002} with respect to $\sigma$
we also have
\begin{multline*}
\frac{(n+\sigma)-r(k+\sigma)}{r(n-k)}\left(1+\frac{n-k}{k+\sigma}\alpha\right)^r
-\frac{\alpha (n+\sigma)}{(k+\sigma)}\left(1+\frac{n-k}{k+\sigma}\alpha\right)^{r-1}
\\
+
(n+\sigma)\left(1+\frac{n-k}{k+\sigma}\alpha\right)^{r-1}\frac{d\alpha}{d\sigma}
-(1-\alpha)^r
-(n+\sigma)(1-\alpha)^{r-1}\frac{d\alpha}{d\sigma} = 0.
\end{multline*}
Solving for $\frac{d\alpha}{d\sigma}$ and making the change of variable $t=(1-\alpha)^{-1}(1+\frac{n-k}{k+\sigma}\alpha)$ gives
\[
\frac{d\alpha}{d\sigma} = \frac{t^r(n+\sigma)(r-1)-t^{r-1}(r(n-k)+r(n-k)}{r(n-k)(k+\sigma)(t^{r-1}-1)(t+\frac{n-k}{k+\sigma})}
\]
Note that, since $\alpha\in(-\frac{k+\sigma}{n-k},0)$, we have $t\in(0,1)$, so the above fraction is well defined.
Going back to $\frac{dG}{d\sigma}$ we obtain
\begin{equation}\label{1110:eq003}
\frac{dG}{d\sigma} = \frac{(r-2)t^{r+1}-2(r-1)t^r+t^2r}{r(k+\sigma)^2(t^{r-1}-1)(t+\frac{n-k}{k+\sigma})^2}.
\end{equation}
The numerator vanishes when $t=0,1$. Set $B(t)=(r-2)t^{r-1}-2(r-1)t^{r-2}+r$.
Then $B(1)=0$ and for $0<t<1$ we have
\[
\frac{B'(t)}{(r-1)(r-2)t^{r-3}} = t-2 < 0,
\]
which shows $B(t)>0$ for $t\in(0,1)$.
Therefore the numerator in \eqref{1110:eq003} is always positive.
Since the denominator is negative for $t\in (0,1)$, we deduce $\frac{dG}{d\sigma}<0$,
which gives the lemma.
\end{proof}

\section{Proofs of Theorem \ref{T1} and Theorem \ref{T1max}}\label{S4}

We are now ready to combine the results from the previous section
in order to finish the proofs of Theorems \ref{T1} and \ref{T1max}.
Loosely speaking, with the Lagrange multipliers method
we determined extremal points of the energy in the set $M_{r,n}\cap\RR_{+}^n$.
Using the auxiliary lemmas,
we will select the largest local maximum and the smallest local minimum
over such a set. Then, by comparing with the values of the energy
on the boundary $M_{r,n}\cap \partial\RR_{\geq 0}^n$ and recalling the discussion
at the beginning of the previous section, we will determine the global maximum and the global minimum of $E$.

\subsection*{Proof of Theorem \ref{T1}}
Let us determine the smallest minimum of $E$ in the set $M_{r,n}\cap\RR_{+}^n$.
From the analysis in the previous section,
such a minimum is to be found among the values
\[
\{S_1^2U(k,n),\;S_1^2V(k,n):\;1\leq k\leq \lfloor n/2\rfloor\}.
\]

We claim that the smallest value in the above set is $S_1^2V(1,n)$.
To see this, we distinguish in two cases according to the size of $\ntilde$.

If $\ntilde>n-\lfloor n/2\rfloor$, then by Lemma \ref{S2:lemma1}
we know that $U(k,n)$ is well-defined for $k$ in the
range $n-\ntilde<k\leq\lfloor{n/2}\rfloor$,
while $V(k,n)$ is well defined for $1\leq k<\lfloor{n/2}\rfloor$.
Using the monotonicity properties of $U$ and~$V$ proved in Lemma \ref{S2:lemma2},
we can write, for $1\leq k\leq \lfloor n-\ntilde\rfloor< k'\leq\lfloor{n/2}\rfloor$,
\begin{equation}\label{2909:eq001}
V(1,n) \leq V(k,n) \leq V(\lfloor{n-\ntilde}\rfloor,n) \leq V(k',n) \leq U(k',n) \leq U(\lfloor{n-\ntilde}\rfloor+1,n),
\end{equation}
from which we see that $V(1,n)$ is the smallest value.

If instead $1<\ntilde\leq n-\lfloor n/2\rfloor$, Lemma \ref{S2:lemma1}
gives no value of $k\leq \lfloor n/2\rfloor$ for which $U$ is defined,
while $V$ is defined for $1\leq k<\ntilde$.
Again by the monotonicity of $V$, we have
\begin{equation}\label{2909:eq002}
V(1,n) \leq V(2,n)\leq \cdots \leq V(\lceil{\ntilde}\rceil-1,n),
\end{equation}
so again $V(1,n)$ is the smallest value, which proves our claim.

Now we want to show that $S_1^2V(1,n)$ is in fact the global minimum
of $E$ over $M_{r,n}$. To do this, we need to compare it with the smallest
values attained by $E$ on the boundary $M_{r,n}\cap\partial\RR_{\geq 0}^n$.
By \eqref{2909:eq004} we know that, up to a 
multiple of $S_1$ and up to an explicit factor,
the minimum on the boundary will be of the
form $S_1^2V(1,n-j)$ for some $j$ with $1\leq j\leq n-1$.
This means that we aim to prove
\[
S_1^2V(1,n) \leq \frac{n}{n-j}S_1^2V(1,n-j) + \frac{jS_1^2}{n-j}.
\]
Rearranging, we want
\[
F(1,n) = \frac{V(1,n)+1}{n} \leq \frac{V(1,n-j)+1}{n-j} = F(1,n-j).
\]
By Lemma \ref{S2:lemma3} we know that $F$ is decreasing in $n$,
and therefore the above holds. We conclude that
\[
E_{min} = S_1^2V(1,n) = (n-1) S_1^2 \alpha^2,
\]
where $\alpha\in(0,1)$ is the root of $g(\alpha)=0$,
where $g$ is defined in \eqref{def:g}.
This gives the first part of Theorem \ref{T1}.

To prove the second part of the theorem, let be $(x_1,\dots,x_n)$ be
any point on $M_{r,n}$, and let $E$ be its potential energy.
In particular, $E\geq E_{min}$. If $\alpha,\beta$
are the non-negative roots of 
\[
E=(n-1)S_1^2\beta^2,\qquad E_{min} = (n-1)S_1^2\alpha^2,
\]
then we must have $0\leq \alpha\leq \beta<1$.
Furthermore, the function
\[
f(\alpha)=(1+(n-1)\alpha)^r+(n-1)(1-\alpha)^r
\]
is increasing in $(0,1)$, see \eqref{2909:eq005}. Thus, we deduce
\[
\pushQED{\qed} 
\frac{n^rS_r}{S_1^r} = f(\alpha) \leq f(\beta).
\qedhere\popQED
\]

Concerning the proof of Theorem \ref{T1max}, we will need more care 
since the global maximum of $E$ will depend on the number $\ntilde$
defined in~\eqref{def:ntilde}.

\subsection*{Prooof of Theorem~\ref{T1max}}
We proceed as in the proof of Theorem~\ref{T1} and start by
looking at the critical points of $E$
with only positive coordinates.
In this case, the maximal value is found among the numbers
\begin{equation}\label{2210:eq003}
\{S_1^2U(k,n),\;S_1^2V(k,n):\;1\leq k\leq \lfloor n/2\rfloor\}.
\end{equation}
Let us set $k_*=\lfloor n-\ntilde\rfloor=n-\lceil\ntilde\rceil$.
In other words, $k_*$ is the unique integer such that $n-k_*-1<\ntilde< n-k_*$.
By H\"older's inequality, this implies that if $x_1+\cdots+x_n=S_1$
and $x_1^r+\cdots+x_n^r=S_r$, then at most $k_*$ of the numbers $x_1,\dots,x_n$
can be zero.

As in the proof of Theorem \ref{T1}, we distinguish
two cases depending on the size of~$\ntilde$.
If $\ntilde>n-\lfloor n/2\rfloor$,
then by repeating the argument before \eqref{2909:eq001} we deduce
that the largest value in \eqref{2210:eq003}
is $S_1^2U(k_*+1,n)$, which gives therefore the
maximum of $E$ in $M_{r,n}\cap\RR_+^n$.
If we look at the boundary, $M_{r,n}\cap\partial\RR_{\geq 0}^n$,
where we can have zeros,
then by \eqref{2909:eq004} we want to compare $U(k_*+1,n)$ with the quantity
\begin{equation}\label{2210:eq001}
\frac{n}{n-j}U(k_*+1-j,n-j) + \frac{j}{n-j},\quad 1\leq j\leq k_*.
\end{equation}
We claim that, for any $j$, \eqref{2210:eq001} is greater than $U(k_*+1,n)$.
After rearranging, this translates into the inequality
\[
G(k_*+1,n)=\frac{U(k_*+1,n)+1}{n} \leq \frac{U(k_*+1-j,n-j)+1}{n-j} = G(k_*+1-j,n-j).
\]
By Lemma~\ref{S2:lemma4} we know that $G(k,n)\geq G(k+1,n+1)$.
Applying this $j$ times starting from $G(k_*+1-j,n-j)$, we obtain the claim.
Moreover, since \eqref{2210:eq001} is largest when~$j$~is as large as possible,
by taking $j=k_*$ we deduce
\begin{equation}\label{2210:eq006}
E^{max} = \frac{S_1^2n}{n-k_*}U(1,n-k_*) + \frac{S_1^2k_*}{n-k_*}.
\end{equation}

Now, if $\ntilde\leq n-\lfloor n/2\rfloor$, the set in \eqref{2210:eq003}
contains only the elements $S_1^2V(k,n)$, with $1\leq k<\ntilde$,
and they are in increasing order (cf.~\eqref{2909:eq002}).
Therefore, the largest element is $S_1^2V(\lceil \ntilde\rceil-1,n)$.
Let us compare this with the value of the energy
on $M_{r,n}\cap\partial\RR_{\geq 0}^n$, where we can have zeros.
If we have $j$ zeros, with
\[
\ntilde\leq (n-j) - \lfloor(n-j)/2\rfloor
\]
(that is, $j\leq j_*=n-\lceil2\ntilde\rceil-\eps$, where $\eps=0,1$ according to whether $\lceil2\ntilde\rceil$ is even or odd, respectively),
then, by \eqref{2909:eq004} and what we have just discussed,
we need to compare $S_1^2V(\lceil\ntilde\rceil-1,n)$ with
\begin{equation}\label{2210:eq004}
\frac{S_1^2 n}{n-j} V(\lceil \ntilde\rceil-1,n-j) + \frac{S_1^2j}{n-j}.
\end{equation}
Lemma \ref{S2:lemma3} tells us that
\[
\frac{V(k,n)+1}{n} \leq \frac{V(k,n-j)+1}{n-j}.
\]
Applying this with $k=\lceil\ntilde\rceil-1$ we deduce that \eqref{2210:eq004}
is greater than $S_1^2V(\lceil \ntilde\rceil-1,n)$ and attains the largest value
when $j=j_*$. In other words, the quantity in \eqref{2210:eq004} with $j=j_*$
is a candidate for the global maximum of the potential energy,
but we still need to compare it with the maximal values
of the energy when we allow $j>j_*$ zeros.
As soon as $j>j_*$, the function $U$ starts to appear.
By Lemma \ref{S2:lemma1}, we know that $U$ is generally larger than $V$.
Because of this, we may want to compare
\begin{equation}\label{2111:eq005}
\frac{S_1^2 n}{n-j_*} V(\lceil \ntilde\rceil-1,n-j_*) + \frac{S_1^2j_*}{n-j_*}.
\end{equation}
with
\begin{equation}\label{2210:eq005}
\frac{S_1^2n}{n-j_*-1} U(\lceil \ntilde\rceil-1,n-j_*-1) + \frac{S_1^2(j_*+1)}{n-j_*-1}.
\end{equation}
We claim that the above is larger than \eqref{2111:eq005}.
To see this, apply Lemma \ref{S2:lemma3} to bound
\[
\frac{V(\lceil \ntilde\rceil-1,n-j_*)+1}{n-j_*}
\leq
\frac{V(\lceil \ntilde\rceil-1,n-j_*-1)+1}{n-j_*+1},
\]
and then use Lemma \ref{S2:lemma1} to bound
\[
\frac{V(\lceil \ntilde\rceil-1,n-j_*-1)+1}{n-j_*+1}
\leq
\frac{U(\lceil \ntilde\rceil-1,n-j_*-1)+1}{n-j_*+1},
\]
which implies our claim. Furthermore, observe that \eqref{2210:eq005}
can be written as
\[
\frac{S_1^2n}{n-j_*-1} U(\lfloor n-j_*-1-\ntilde\rfloor+1,n-j_*-1) + \frac{S_1^2(j_*+1)}{n-j_*-1}.
\]
At this point we have the same type of quantity as in \eqref{2210:eq001}
and we can argue as in \eqref{2210:eq001}--\eqref{2210:eq006}, showing that the largest
value of the energy is
\begin{equation}\label{2210:eq007}
E^{max} = \frac{S_1^2n}{n-k_*}U(1,n-k_*) + \frac{S_1^2k_*}{n-k_*}.
\end{equation}
Combining \eqref{2210:eq006} and \eqref{2210:eq007} gives
Theorem \ref{T1max}.
\qed

\section{Energy and discriminant}\label{S5}

In this section we prove Theorem \ref{T2}, which relates the potential
energy and the discriminant $\Delta$. At the end of the section we explain
how to modify the argument to obtain Theorem \ref{T1.7}.

Let $S_1,S_2>0$ with $(n-1)S_2<S_1^2<nS_2$.
Our strategy consists in maximizing $\log\Delta$ over the manifold
$M\subseteq \RR_{\geq 0}^n$ of non-negative real points $x_1,\dots,x_n$ such that
\[
x_1+\cdots+x_n = S_1,
\quad
x_1^2+\cdots+x_n^2 = S_2.
\]
Note that we can assume that $\Delta>0$, for otherwise
Theorem \ref{T2} holds trivially. Hence, we can assume
that $x_i\neq x_j$ for $i\neq j$.
We argue as in Section \ref{S3}: the functions
$F(x_1,\dots,x_n)=x_1+\cdots+x_n$ and
$G(x_1,\dots,x_n)=x_1^2+\cdots+x_n^2$
have gradients
\[
\begin{split}
\nabla F = (1,\dots,1),\quad \nabla G=2(x_1,\dots,x_n).
\end{split}
\]
Therefore, $\nabla F$ and $\nabla G$ are linearly independent unless
$x_1=x_2=\cdots=x_n$, which is excluded on $M$ since we are assuming $S_1^2<nS_2$.
Note also that the assumption $(n-1)S_2<S_1^2$ implies
that none of the points $x_1,\dots,x_n$ can be zero.
In particular, this holds for the point of maximum.

By the Lagrange multipliers theorem, the maximum of $\log\Delta$
on $M$ can be found among the critical points of the function
\begin{equation}\label{3108:eq010}
\frac{1}{2}\log \Delta - \frac{\lambda}{2}(x_1^2+\cdots+x_n^2) + \mu(x_1+\cdots+x_n).
\end{equation}
Let $(x_1,\dots,x_n)$ be one such critical point and write
\[
f(x) = \prod_{i=1}^{n}(x-x_i) = x^n + c_{n-1}x^{n-1}+\cdots + c_1x + c_0.
\]
A calculation shows that (cf.~\cite[(7)--(8)]{siegel}),
after differentiation with respect to $x_k$ in~\eqref{3108:eq010}, we obtain
\[
\frac{f''}{f'}(x_k) - \lambda x_k + \mu = 0,\quad k=1,\dots,n.
\]
Clearing denominator, we see that the polynomial
$f''(x) - (\lambda x-\mu)f'(x)$ vanishes for all $x=x_1,\dots,x_n$.
Since the $x_i$'s are all distinct, if follows that such
a polynomial must be a multiple of $f$.
Comparing the leading coefficients, we deduce the identity
\begin{equation}\label{3108:eq011}
f''(x) - (\lambda x-\mu)f'(x) + \lambda n f(x) = 0.
\end{equation}
In particular, by looking at the coefficient of $x^k$,
we deduce the three-term recurrence relation
\[
\lambda(n-k)c_k = -\mu(k+1)c_{k+1} - (k+2)(k+1)c_{k+2}.
\]
For $k=n-1$ and $k=n-2$ we obtain
\begin{equation}\label{3108:eq012}
c_{n-1} = -n\mu \lambda^{-1},
\qquad
c_{n-2} = \binom{n}{2}(\mu^2-\lambda)\lambda^{-2}.
\end{equation}
Consequently,
\begin{equation}\label{3108:eq015}
E = (n-1)c_{n-1}^2-2nc_{n-2} = \binom{n}{2}\frac{2n}{\lambda}.
\end{equation}
Now we evaluate the maximum of the discriminant. In order to do this,
we work first with the resultant and then go back to the discriminant
by means of the identity
\begin{equation}\label{3108:eq013}
\Delta=(-1)^{\frac{n(n-1)}{2}}\res(f,f').
\end{equation}
Since \eqref{3108:eq011} gives $\lambda n f=-f''+(\lambda x-\mu)f'$, we can write
\[
\res(f,f')
= \frac{\res(\lambda n f,f')}{(\lambda n)^{n-1}}
= \frac{\res(-f''+(\lambda x-\mu)f',f')}{(\lambda n)^{n-1}}
= (-\lambda)^{1-n} n^{3-n} \res(f',f'').
\]
Therefore we get the relation
\begin{equation}\label{3108:eq014}
\res(f,f') = (-\lambda)^{1-n} n^{n} \res\left(\frac{f'}{n},\frac{f''}{n}\right).
\end{equation}
Notice that if we differentiate \eqref{3108:eq011} we obtain
\[
f'''(x) - (\lambda x-\mu)f''(x) + \lambda (n-1) f'(x) = 0.
\]
This means that $f'/n$ solves the same differential equation as $f$,
except that $n$ is replaced by $n-1$.
This allows us to iterate the identity \eqref{3108:eq014} for the resultant,
which leads us to
\[
\res(f,f') = (-1)^{\binom{n}{2}} \lambda^{-\binom{n}{2}} Y(n).
\]
By \eqref{3108:eq013}, we conclude
\[
\Delta = Y(n)\lambda^{-\binom{n}{2}},
\]
or equivalently $\lambda^{\binom{n}{2}}=\Delta^{-1}Y(n)$.

To conclude the proof of Theorem \ref{T2}, consider any
point $(x_1,\dots,x_n)$ on $M$.
The corresponding discriminant $\Delta_0$
will satisfy $\Delta_0\leq \Delta$. If we define $\lambda_0$ by the relation
\begin{equation}\label{2311:eq001}
\lambda_0^{\binom{n}{2}}=\Delta_0^{-1}Y(n),
\end{equation}
we will thus have $\lambda_0\geq \lambda$.
Since $S_1,S_2$ are constant on $M$, the energy $E=nS_2-S_1^2$
is also constant, and we deduce
\[
E = \binom{n}{2}\frac{2n}{\lambda} \geq \binom{n}{2}\frac{2n}{\lambda_0}.
\]
After dividing by the binomial coefficient and raising
to the $\binom{n}{2}$th power, we obtain Theorem~\ref{T2}.

As for the proof of Theorem \ref{T1.7}, we adapt the above argument as follows.
First we note that \eqref{3108:eq012} gives
\[
S_1 = \frac{n\mu}{\lambda},\qquad S_2 = S_1^2 - 2\binom{n}{2}\frac{\mu^2-\lambda}{\lambda^2}.
\]
Therefore
\[
\begin{split}
F(x_1,\dots,x_n)
&=
\frac{a}{n}S_2 + \frac{b}{n^2}S_1^2 + \frac{c}{n}S_1 + d
\\
&=
\frac{a(n-1)}{\lambda} + (a+b)\frac{S_1^2}{n^2} + \frac{cS_1}{n} + d
\geq
\frac{a(n-1)}{\lambda_0} + (a+b)\frac{S_1^2}{n^2} + \frac{cS_1}{n} + d.
\end{split}
\]
Theorem \ref{T1.7} follows by replacing $\lambda_0$ using \eqref{2311:eq001}.

%% BIBLIOGRAPHY %%

\end{document}